\documentclass[a4paper,11pt]{article}

\usepackage{amsmath,amssymb,amsthm,bbm}
\usepackage{bm}
\usepackage{paralist,enumitem} 
\usepackage{hyperref,url}
\usepackage{color}
\usepackage[margin=30mm]{geometry}
\usepackage{placeins}
\usepackage{subfig}
\usepackage{cleveref}
 
\usepackage{pgfplots}  
\pgfplotsset{compat=1.9}
\usepackage{xcolor}
\usepackage{tikz}
\usetikzlibrary{plotmarks,backgrounds}
\usepgfplotslibrary{external}
 \tikzexternalize[prefix=images/TikZ_converted/]

\pgfmathdeclarefunction{gauss}{2}
{%
  \pgfmathparse{1/(#2*sqrt(2*pi))*exp(-((x-#1)^2)/(2*#2^2))}%
}

%
   \tikzexternaldisable


%
 
\usepackage{todonotes}

\newcommand{\change}[1]{#1}



 \newcommand{\MAP}{{\mathrm{MAP}}}

\newcommand{\N}{\mathbb{N}}
\newcommand{\R}{\mathbb{R}}
\newcommand{\Z}{\mathbb{Z}}

\newcommand{\G}{\mathcal{G}}
\newcommand{\Ga}{\mathcal{G}^{a}}
\newcommand{\Gb}{\mathcal{G}^{b}}
\newcommand{\etaa}{\eta^{a}}

\newcommand{\ya}{y^{a}}
\newcommand{\yb}{y^{b}}
\newcommand{\Ya}{Y_{a}}
\newcommand{\Yb}{Y_{b}}
\newcommand{\La}{L_{a}}
\newcommand{\Lb}{L_{b}}
\newcommand{\Ind}{\bm{1}}
\newcommand{\la}{l^{a}}
\newcommand{\lb}{l^{b}}
\newcommand{\Phia}{\Phi_{a}}
\newcommand{\Phib}{\Phi_{b}}
\newcommand{\Gammaa}{\Gamma_{a}}
\newcommand{\Gammab}{\Gamma_{b}}
\newcommand{\sigmaa}{\sigma_{a}}
\newcommand{\sigmab}{\sigma_{b}}

\renewcommand{\div}{ \, \mathrm{div} \, }

\newcommand{\dx}{\, dx}

\newcommand{\pd}{\partial}

\newcommand{\pdnu}{\pd_{n}}

\newcommand{\abs}[1]{\left| #1 \right|}
\newcommand{\norm}[1]{\| #1 \|}
\newcommand{\normY}[1]{\| #1 \|_Y}
\newcommand{\normYa}[1]{\| #1 \|_{\Ya}}
\newcommand{\normYb}[1]{\| #1 \|_{\Yb}}
\newcommand{\inner}[2]{\langle #1 , #2 \rangle}
\newcommand{\eps}{\varepsilon}
\newcommand{\Lap}{\Delta}
\newcommand{\PP}{\mathcal{P}}
\newcommand{\CC}{\mathcal{C}}
\renewcommand{\AA}{\mathbb{A}}
\newcommand{\BB}{\mathbb{B}}
\newcommand{\WW}{\mathbb{W}}

\theoremstyle{plain}
\newtheorem{theorem}{Theorem}[section]

\newtheorem{lemma}[theorem]{Lemma}
\newtheorem{corollary}[theorem]{Corollary}
\newtheorem{remark}[theorem]{Remark}

\numberwithin{equation}{section}

\begin{document}

\title{Bayesian parameter identification in Cahn--Hilliard models for biological growth\,\footnote{The first and fourth author gratefully acknowledge the support by the Deutsche Forschungsgemeinschaft (DFG) through the International Research Training Group IGDK 1754 ``Optimization and Numerical Analysis for Partial Differential Equations with Nonsmooth Structures".  The third and fourth author gratefully acknowledge the support by the DFG and Technische Universit\"at M\"unchen through the International Graduate School of Science and Engineering within project 10.02 BAYES.}}

\author{Christian Kahle \footnotemark[3] 
\and Kei Fong Lam \footnotemark[2] 
\and Jonas Latz \footnotemark[3] 
\and Elisabeth Ullmann \footnotemark[3]}

\date{ }
 
\maketitle

\renewcommand{\thefootnote}{\fnsymbol{footnote}}
\footnotetext[3]{Zentrum Mathematik, Technische Universit\"at M\"unchen, 85748 Garching bei M\"unchen, Germany \tt (\{Christian.Kahle,jlatz,Elisabeth.Ullmann\}@ma.tum.de).}
\footnotetext[2]{Department of Mathematics, The Chinese University of Hong Kong, Shatin, N.T., Hong Kong \tt (kflam@math.cuhk.edu.hk)}

\begin{abstract}
{
We consider the inverse problem of parameter estimation in a diffuse interface model for tumour growth.  
The model consists of a fourth-order Cahn--Hilliard system and contains three phenomenological parameters: the tumour proliferation rate, the nutrient consumption rate, and the chemotactic sensitivity.  
We study the inverse problem within the Bayesian framework and construct the likelihood and noise for two typical observation settings.
One setting involves an infinite-dimensional data space where we observe the full tumour.
In the second setting we observe only the tumour volume, hence the data space is finite-dimensional.
We show the well-posedness of the posterior measure for both settings, building upon and improving the analytical results in [C. Kahle and K.F. Lam, Appl. Math. Optim. (2018)].
A numerical example involving synthetic data is presented in which the posterior measure is numerically approximated by the sequential Monte Carlo approach with tempering.  
}
%
%
%
\end{abstract}

\noindent \textbf{Key words.}   Tumour modelling, Bayesian inversion, Cahn--Hilliard, Sequential Monte Carlo \\

\noindent \textbf{AMS subject classification.} 65M32, 92B05, 92C17, 35Q92, 35R30

\section{Introduction}\label{sec:intro}
Until recently, the discipline of medical science relied heavily on experiments to obtain statistical data as a basis for understanding the behaviour of complex biomedical systems, and to design new drugs for the treatment of diseases.  
The advent of high-performance computing, big data and bioinformatics, coupled with advances in mathematical and statistical theories and methodologies, has lead to the emergence of patient-specific diagnoses, and treatment driven by computational models.

A complex biomedical phenomenon that is still not fully understood is the growth of cancer.  A tumour is a mass of tissue that arises when certain inhibition proteins in the cells have been switched off by genetic mutations.
This leads to unregulated growth that is limited only by the amount of nutrients in the surrounding environment.  Tumours display characteristics that are fundamentally different from normal cells.  Tumour cells are able to ignore \textit{apoptosis} (programmed cell death) signals, remain elusive to attacks from the immune system, and, most dangerously, have the ability to induce the growth of new blood vessels towards itself (\textit{angiogenesis}).
This leads to the spreading of cancer to other parts of the body, and the formation of secondary tumours (\textit{metastasis}). 

The study of tumour growth can be roughly divided according to the physical and chemical phenomena occuring at three scales \cite{Oden16}: the tissue scale which is commonly observed in experiments involving movement of cells (such as metastasis and growth into the extracellular matrix) and nutrient diffusion; the cellular scale consisting of activities and interactions between individual cells such as mitosis and  the activation of receptors; and sub-cellular scale where genetic mutations and DNA degradation occur.  
We focus on the tissue-scaled phenomena, as they are the first to be detected in a routine diagnosis, and can be described fairly well with help of continuum models consisting of differential equations.

Since the seminal work in \cite{Burton} and \cite{Green} where simple mathematical models for tumour growth are employed, there has been an explosion in the number of models proposed for modelling the multiscale nature of cancer, see for instance \cite{CL,DS,Oden16} and the references cited therein.  
The diversity of model variants reflects the difficulties when we try to identify key biological phenomena that are responsible for experimental observations.

As metastasis is an important hallmark of cancer, we restrict our attention to continuum models that can capture such events.  
Continuum models often rely on a mathematical description to distinguish tumour tissue from healthy host tissues.
To be able to capture metastasis the models have to allow for some form of topological change of the separation layers between the tumour and the host tissues.  
The classical description represents the separation layers as idealised hypersurfaces, known also as the \emph{sharp interface approach}.
In this case complicated boundary conditions have to be imposed to model the mass transfer between tumour and host cells.  
Unfortunately, in the event of metastasis, the separation layers can no longer be represented as a hypersurface, 
and the classical sharp interface approach breaks down.  
To overcome this difficulty some authors proposed a \emph{diffuse interface approach} (see for example \cite{CLLW,CL,Frie,GLNS,GLSS,Hawkins,Lima,Wise} and the references cited therein) which is well-known for being able to handle changes in the topology.

\subsection{Diffuse interface model}
Let $\Omega \subset \R^{2}$ denote a bounded domain with boundary $\pd \Omega$.  
For an arbitrary but fixed constant $T > 0$ we define $Q := \Omega \times (0,T)$ and $\Sigma := \pd \Omega \times (0,T)$.  
Let $n$ denote the outer unit normal vector of $\pd \Omega$, and $\pdnu f := \nabla f \cdot n$ is the co-normal derivative of a function $f$.  
We assume that a tumour is surrounded by healthy tissue and that a nutrient, whose concentration we denoted by $\sigma$, is present which is consumed only by the tumour cells for proliferation.  
We use the difference in volume fractions between the tumour cells and the healthy host cells, denoted by $\varphi \in [-1,1]$, to indicate their location.  
In particular, the set $\{\varphi(x,t) = 1\}$ is the tumour region and the set $\{\varphi(x,t) = -1 \}$ is the healthy tissue.  
The diffuse interface model we consider is a simplification of the general model derived in \cite{GLSS}, however, the model retains some important characteristics.
The model equations read
\begin{subequations}\label{CH}
\begin{alignat}{4}
\label{varphi} \varphi_t & = \div (m(\varphi) \nabla (\mu - \chi \sigma)) + \PP f(\varphi) g(\sigma) && \text{ in } Q, \\
\label{mu} \mu &= - \eps \Lap \varphi + \eps^{-1}\Psi'(\varphi) && \text{ in } Q, \\
\label{sigma} \sigma_t &= \Lap \sigma - \CC h(\varphi) \sigma && \text{ in } Q, \\
\label{noflux} 0 & = \pdnu \varphi = \pdnu \mu = \pdnu \sigma && \text{ on } \Sigma,  \\
\varphi(0)& = \varphi_0, \quad \sigma(0) = \sigma_0 && \text{ in } \Omega,
\end{alignat}
\end{subequations}
where in the above, $\mu$ is the auxiliary variable known as the chemical potential associated with $\varphi$, $m(\varphi)$ is a non-degenerate concentration-dependent mobility, and $\Psi'(\varphi)$ is the derivative of a double-well potential $\Psi(\varphi)$ that has two equal minima at $\varphi = \pm 1$.  
The classical example is $\Psi(s) = (s^2 - 1)^2$. 
The constant $\eps >0$ can be viewed as the thickness of the separation layers $\{\abs{\varphi(x,t)} < 1 \}$ between the tumour and the host tissues, so that in the limit $\eps \to 0$, one can recover sharp interface models for tumour growth, see \cite[\S 3]{GLSS} for more details.

When $\chi = \PP = 0$, \eqref{varphi}--\eqref{mu} reduces to the classical Cahn--Hilliard equation.  
To model tumour proliferation, we include a source term of the form $\PP f(\varphi) g(\sigma)$, where $\PP \geq 0$ can be interpreted as a proliferation rate, $f(\varphi)$ is a smooth indicator function of the growing tumour front, and $g(\sigma)$ captures how the nutrient is used for growth.  
The additional flux term $\nabla (\chi \sigma) = \chi \nabla \sigma$ in the divergence allows the tumour cells to exhibit chemotactic behaviour and move towards regions of high nutrient.  
Hence, $\chi \geq 0$ has the meaning of a chemotactic sensitivity.  
Lastly, we use the reaction-diffusion equation \eqref{sigma} to model the evolution of the nutrient, and the source term $\CC h(\varphi) \sigma$ accounts for how nutrient is consumed by the cells.  
The parameter $\CC \geq 0$ is the consumption rate, and the function $h(\varphi)$ is a smooth interpolation between $h(-1) =0$ and $h(1) = 1$, so that only the tumour cells 
consume the nutrient.
We refer the reader to the earlier work \cite{KL} for some justifications on studying the reduced model \eqref{CH} as opposed to the full model in \cite{GLSS}.

\subsection{Parameter identification}\label{subsec:param}

The main scientific interest of the present work lies in the estimation of certain model parameters given observations on the evolution of the tumour.  This is also known as \textit{model calibration}.  
For meaningful applications of the model in medical research, it is important to calibrate the model parameters in preparation for a comparison between simulations and experimental observations.
This process is known as \textit{model validation}.  
The evaluation and refinement of the mathematical model, such as accounting for fine-scale phenomena, elimination of slow processes, and changes to boundary conditions, can then be made to improve our understanding of tumour growth.

In this paper we focus on the identification of the proliferation rate $\PP$, the chemotactic sensitivity $\chi$, and the consumption rate $\CC$, which we assume to be \emph{non-negative constants}.  It is reasonable to assume constant parameters here, as spatial variations of the mechanisms of proliferation, consumption and chemotactic movement are handled by the functions $m$, $f$, $h$ and $g$.  
It is well known that initially homogeneous tumour cells will eventually develop \change{heterogeneous} growth behaviour, manifesting in the appearance of quiescent cells (those that are not proliferating but are still alive) and necrotic cells (those that are dead).  
Since our model \eqref{CH} accounts for the evolution of a young tumour before the onset of \change{heterogeneous} growth behaviour, it is appropriate to assume that all tumour cells proliferate, move, and consume nutrients at the same rate.  
Since $\eps$ does not influence the growth of the tumour cells, in the theoretical analysis we set $\eps = 1$, and focus on the identification of $\PP, \chi$ and $\CC$.


The classical framework for parameter estimation uses Tikhonov regularization. 
The goal is to obtain the \emph{optimal} parameters such that the mismatch between the model output and the data is minimised, typically in some form of $L^{2}$-distance.  
For the model \eqref{CH} this has been done in the recent work \cite{KL}. 
However, the robustness of the optimal parameters with respect to uncertainty in the measurements is only investigated numerically by optimizing for a range of given noise levels.  To address this issue we employ a modern framework for parameter identification, namely statistical inversion using the Bayesian methodology.
This allows us to incorporate the uncertainties associated with measurements and the relative probabilities of different optimal parameters given by the data.  

For the observations, we consider data obtained from (two-dimensional) snapshots of the tumour, either at a single time instance or at several time instances.  
All observations are inherently polluted by some form of noise. 
We model the noisy observations using a set of linear functionals $\{l_{j}\}_{j=1}^{J}$ of the variable $\varphi$ (which serves as an indicator of the tumour location), where $J \in \N$ is fixed.
Specifically, we assume that $l_{j}: \varphi \mapsto l_{j}(\varphi) \in Y^{(j)}$ for some separable Banach space $Y^{(j)}$.  
The data space is $Y := \prod_{j=1}^J Y^{(j)}$.
The noisy observations, denoted by $\{y_{j}\}_{j=1}^{J} \in Y$, $y_{j} \in Y^{(j)}$, are expressed as
\begin{align}\label{obs:data}
y_{j} = l_{j}(\varphi) + \eta_{j}, \quad j = 1, \dots, J,
\end{align}
with the observational noise denoted by $\{\eta_{j}\}_{j=1}^{J}$, $\eta_{j} \in Y^{(j)}$.  
In this work we analyze two choices for $Y^{(j)}$, $J$ and $l_{j}$:\smallskip

\begin{enumerate}
\item[$(a)$] Let $J = 1$ and $Y^{(1)} := L^2(\Omega)$ with $l_{1}(\varphi) := \varphi(\cdot, T)$; \smallskip
\item[$(b)$] Let $J > 1$ and $Y^{(j)} = \mathbb{R}$ with $l_{j}(\varphi) = \frac{1}{2} \int_{\Omega} (\varphi(x, t_{j})+1 )\mathrm{d}x$, $j=1,\dots,J$, for an increasing sequence $\{t_{j}\}_{j=1}^{J}$ in the time interval $[0,T]$.\smallskip
\end{enumerate}

In setting $(a)$ we observe the tumour everywhere in the spatial domain $\Omega$ at a particular point in time.
In setting $(b)$ we observe the volume of the tumour at $J$ sequential points in time.  
The latter setting is an adaptation of the problem setting in \cite{Collis_tutorial}.
The form of $l_{j}(\varphi)$ in $(b)$ is motivated by the fact that the tumour region is represented by the set $\{\varphi(x,t) = 1\}$.
Hence the function $\frac{1}{2}(\varphi+1)$ takes the value $+1$ inside the tumour, and $0$ outside, which can be viewed as a smoothed indicator function for the tumour.

\subsection{Bayesian inversion and main contributions}
We collect the parameters $(\PP, \chi, \CC)$ in a variable $u \in X$, where $X$ denotes a subspace of a separable Banach space associated with the forward model \eqref{CH}.
Moreover, we define the noise vector $\eta:=(\eta_{1}, \dots, \eta_{J})^\top \in Y$ and the observation vector $y:=(y_{1}, \dots, y_{J})^\top \in Y$.
Then, equations \eqref{obs:data} read
\begin{align}\label{obs:data:com}
y = \G(u) + \eta,
\end{align} 
where we introduced the \emph{forward response operator} $\G : X \to Y$ with
\begin{align*}
\G(u) = (l_{1}(\varphi(u)), \dots, l_{J}(\varphi(u))).
\end{align*}
The operator  $\G$ is the composition of the forward solution operator 
$u = (\PP, \chi, \CC) \mapsto \varphi$ and the observation functionals $\varphi \mapsto l_{j}(\varphi)$.  
Whenever we discuss a particular setting, we denote the forward response operator in setting $(a)$ by $\Ga$ and in setting $(b)$ by $\Gb$.  
Throughout the paper, we use the notation $\{\etaa, \la, \ya\}$ to denote the noise, observation functional and observations for setting $(a)$, and analogously for setting $(b)$.  We use the notation $\G$, $\eta$, $l$ and $y$ if we do not distinguish between setting $(a)$ and $(b)$.
  
We treat the parameter vector $u$ and the observational noise $\eta$ as stochastically independent random variables, i.e., $u$ and $\eta$ are measurable maps from an underlying probability space $(\Omega', \mathcal{F}',\mathbb{P})$ to the spaces $X$ and $Y$, respectively, and satisfy
$\mathbb{P}(u \in A_X, \eta \in A_Y) = \mathbb{P}(\eta \in A_Y)\cdot \mathbb{P}(\eta \in A_Y)$
for measurable subsets $A_X \subseteq X$, $A_Y \subseteq Y$.
Moreover, $u$ is distributed according to a \emph{prior (measure)}, and we assume that the noise is distributed as $\eta \sim \mathrm{N}(0, \Gamma)$.  
In this framework, the observation of the data can be considered as an event $\{\mathcal{G}(u)+\eta = y\}$.
Hence the parameter identification task consists in the computation of the  \emph{posterior (measure)}, that is, the conditional measure of $u$ given that $\{\mathcal{G}(u)+\eta = y\}$ occurs.

The Bayesian framework for parameter identification has been investigated for the Gompertzian tumour \change{spheroid} model in
\cite{Achilleos,Collis_tutorial,Paek,Pat}, and for reaction-diffusion models in \cite{Chang,Le,Meghdadi,Menze}.
The use of phase field type models such as \eqref{CH} for tumour growth modelling and prediction have been suggested first in \cite{Oden2010}, where the beginning steps of the development of Bayesian methods for statistical calibration, 
model validation and uncertainty quantification are also outlined.  
In \cite{HawkinsUQ}, which can be viewed as our closest counterpart in terms of model complexity, 
the authors consider three models of a \change{type} similar to \eqref{CH}, and calibrate the parameters $\PP$, $\chi$, and the diffusion coefficient of a
quasi-static nutrient using synthetic data.  
Subsequently, in \cite{Lima,Lima3,Oden13} the identification of criteria for selecting the most plausible model among several classes of models for given data is discussed.
However, to the best of our knowledge, the well-posedness of the Bayesian inverse problem \eqref{obs:data:com} for parameter identification with a phase field tumour model remains unaddressed, since previous works \cite{HawkinsUQ,Lima,Lima3,Oden2010,Oden13} focused on the implementation aspects of the Bayesian framework.  

Due to the emergence of phase field models as a new modelling tool in biological sciences, and to provide analytical support for the framework of model selection, calibration and validation proposed in \cite{Oden2010}, we study the Bayesian inverse problem \eqref{obs:data:com}. 
The main contribution of our work is to establish a first result on the well-posedness of the Bayesian inverse problem where the underlying model is a nonlinear system of equations with a fourth order Cahn--Hilliard component.  

Our main result, formulated in \Cref{thm:wellposed} below, concerns the well-definedness of the posterior measure and its Lipschitz continuity with respect to the data $y$ in the Hellinger distance.  
The proof relies on the strong well-posedness of solutions to \eqref{CH}, and for our present purposes we require some new extensions of the established results for the forward solution operator $u \mapsto \varphi$.  
Moreover, we recall that setting $(a)$ involves infinite-dimensional function spaces, and the measurement error is modelled by Gaussian white noise, which requires a generalised concept of random fields.  
Hence, the construction of the likelihood function and the proof of posterior well-posedness in this setting may be of independent interest.

We complement the analytical investigations by numerical experiments based on synthetic data for setting $(a)$.
We remark that real-world measurement data is available for setting $(b)$ from \cite{Collis_data} for the volume of 10 individual spherical growing tumours.
However, it turns out that the data contains three-dimensional dynamics (see \cite[\S 2.2]{Collis_tutorial}), and so the data from \cite{Collis_data} is inconsistent with our two-dimensional analysis.  
Although one can perform the parameter identification for setting $(b)$ with \eqref{CH}, preliminary tests lead us to conclude that the computational effort involved in such a nonlinear PDE model for the relatively simple setting $(b)$ is not justified.  
Hence, we focus solely on setting $(a)$ for our numerical computations.

Typically, Bayesian inverse problems are approached with importance sampling \cite{Agapiou2015} or Markov chain Monte Carlo \cite{Beskos2008,Cotter2013}.  
In preliminary tests we observed that both these methods require a prohibitively large number of evaluations of the expensive forward model that is discretized using finite elements on a 2D square domain.  
One model evaluation takes approximately 5 minutes in serial on a workstation, and due to the parabolic nature of the model equations, a single model evaluation is orders of magnitude more expensive compared to one evaluation of the classical elliptic equation often considered in Bayesian inverse problems (see e.g.\ \cite{Dashti2011, Latz2018}) on the same square domain.  
Unfortunately, the required spatial adaptivity prohibits standard domain decomposition approaches to accelerate the computations.

For this reason we approximate the posterior measure by sequential Monte Carlo (SMC) with tempering, see e.g.\ \cite{Beskos2015} for the application of SMC in the context of elliptic equations in three dimensions, and \cite{Kantas2014} for the Navier--Stokes equations, respectively.
We point out that sequential Monte Carlo with tempering has not been employed for the calibration of tumour models; note that Markov chain Monte Carlo (MCMC) methods were employed in previous works \cite{HawkinsUQ,Lima,Lima3,Oden2010,Oden13}.  
We observe that SMC offers attractive features. 
Indeed, SMC allows for parallel model evaluations (as opposed to classical MCMC) and for highly informative data (as opposed to importance sampling).
We select the tempering parameter adaptively to maintain a fixed effective sample size in every SMC update step throughout the algorithm, see \cite{Beskos2016} for a discussion and analysis of this approach.

The remaining part of the paper is organized as follows.  
In \Cref{sec:forward} we construct the likelihood for both problem settings, and establish some properties of the operator $\mathcal{G}$.  \Cref{sec:Bayesian} is dedicated to the well-posedness of the posterior measure.  
In \Cref{sec:FE} we describe a fully discrete finite element approximation of  \eqref{CH}.
In \Cref{sec:SMC} we review SMC with tempering to solve the Bayesian inverse problem.  
A numerical example is presented in \Cref{sec:num}, and we discuss further research topics in \Cref{sec:Discussion}.

\section{Likelihood and properties of the forward model}\label{sec:forward}
\subsection{Model for noise and data likelihood} \label{ssec:likelihoods}
The \emph{(data) likelihood} $L(y|u)$ is a conditional probability density function (PDF) on the data space $Y$.
The observational data is a realisation of the \emph{data-generating measure} with the PDF $L(y|u^\dagger)$ where $u^\dagger$ is fixed.
Given data and likelihood the task of a statistical analysis is the identification of $u^\dagger$.
We discuss the modelling of observational noise and data likelihood in the following paragraphs.
We use the notation $\mathrm{N}(\mu,\Gamma)$ for a \textit{Gaussian measure} with mean function $\mu$ and covariance operator $\Gamma$.

For setting $(a)$, let $\Gammaa := \sigmaa^2\mathrm{Id}_{\Ya}$ be a scaled identity operator on $\Ya$, where $\sigmaa^2 \neq 0$.  We assume that the observational noise satisfies 
$\etaa \sim \mathrm{N}(0,\Gammaa)$.   It is important to note that $\Ya=L^2(\Omega)$ is an infinite-dimensional space, and that the noise $\etaa$ is a random field.
This is in contrast to many settings in the literature where the data space is often finite-dimensional, and the noise is a random vector.

Under the assumption $\Gammaa \propto \mathrm{Id}_{\Ya}$, the random field $\etaa$ is a so-called \emph{white noise}, see \cite[pp.~40--41]{Stein1999} or \cite[pp.~7--9]{Kuo1996} for a definition and review of Gaussian white noise. 
Note that white noise is not a classical Gaussian random field, since the covariance operator is not trace-class in $\Ya  := L^2(\Omega)$, see for instance \cite[Cor.~2.3.2]{Bogachev1998}.
Instead one defines an \emph{abstract Wiener space}, that is a tuple $(\Ya, B)$, where $B$ is a separable Banach space containing $\Ya$.
The random field $\etaa$ takes values in $\Ya$, but those have to be tested with elements in $B$.  For instance, this can be specified by the characteristic function of $\etaa$ fulfilling
\begin{align*}
\int_B \exp\left(\mathrm{i}\langle x, y \rangle_{\Ya}\right)\mathbb{P}(\etaa \in \mathrm{d}x) = \exp\left(-\frac{1}{2\sigmaa^2}\| y\|^2\right) \text{ for } y \in \Ya.
\end{align*}
Due to this implicit definition $\etaa$ is referred to as \emph{generalised random field}. 
Furthermore, we assume that $\etaa$ is independent of the parameter $u$.  
In this case, the data-generating measure is given by $\mathrm{N}(\Ga(u^\dagger), \Gammaa)$, and the conditional probability measure of $y$ given $u$ is $\mathrm{N}(\Ga(u), \Gammaa)$. 

Let $\mathcal{B}\Ya$ denote the Borel-$\sigma$-algebra on $\Ya$.
By definition, the likelihood is a PDF of $\mathrm{N}(\Ga(u), \Gammaa)$ with respect to some measure on $(\Ya, \mathcal{B}\Ya)$ that does not depend on $u$.
We can construct such a PDF by applying the Cameron--Martin theorem.
It states that $\mathrm{N}(\Ga(u), \Gammaa)$ and $\mathrm{N}(0,\Gammaa)$ are
\emph{equivalent}, if $\Ga(X)$ is a subset of the so-called \emph{Cameron--Martin space} associated with $\mathrm{N}(0,\Gammaa)$.
If $\mathrm{N}(\Ga(u), \Gammaa)$ and $\mathrm{N}(0,\Gammaa)$ are equivalent, then the
likelihood can be defined by the Radon--Nikodym derivative of $\mathrm{N}(\Ga(u), \Gammaa)$ w.r.t.~$\mathrm{N}(0,\Gammaa)$.
Since $\Ya=L^2(\Omega)$ is a Hilbert space, the Cameron--Martin space
$\mathrm{CM}({\mathrm{N}(0,\Gammaa)})$ of $\mathrm{N}(0,\Gammaa)$ is the closure of $\mathrm{Img}(\Gammaa^{1/2};\Ya)$ with respect to the norm induced by the inner product $$\langle f, g \rangle_{\mathrm{CM}(\mathrm{N}(0,\Gammaa))}:= \langle \Gammaa^{-1/2}f,\Gammaa^{-1/2} g \rangle_{\Ya},$$ see \cite[p.~44]{Bogachev1998} and \cite[Def.~2.50]{Sullivan2015}.
The Cameron\change{--}Martin theorem for white noise is given in \cite[p.~8]{Kuo1996}.
In our setting it holds that $\mathrm{CM}({\mathrm{N}(0,\Gammaa)}) = \Ya = L^2(\Omega)$. 
Moreover, it follows from \Cref{lem:well-posed} below that $\Ga(u) = \varphi(\cdot, T) \in L^2(\Omega)$ for any $u \in X$. 
Hence $\Ga(X) \subseteq \mathrm{CM}({\mathrm{N}(0,\Gammaa)})$, and thus the measures $\mathrm{N}(\Ga(u), \Gammaa)$ and $\mathrm{N}(0,\Gammaa)$ are equivalent.
We obtain the following Radon\change{--}Nikodym derivative that we will use as the likelihood:

\begin{align*}
\frac{\mathrm{d}\mathrm{N}(\Ga(u),\Gammaa)}{\mathrm{d}\mathrm{N}(0,\Gammaa)} (\ya)= \exp\left(-\frac{\normYa{\Gammaa^{-1/2}\Ga(u)}^2}{2} + \langle \Ga(u), \Gammaa^{-1}\ya\rangle_{\Ya}\right).
\end{align*}
After a few simplifications we arrive at the following likelihood $\La$ and potential $\Phia$:
\begin{align}\label{likelihood}
\La(\ya|u) &:= \exp \left ( - \Phia(u;\ya) \right ), \\  \Phia(u;\ya) &:= \frac{\normYa{\Ga(u)}^2-2\langle \Ga(u), \ya\rangle_{\Ya}}{2\sigmaa^{2}} := \frac{\normYa{\varphi(\cdot,T)}^2-2\langle \varphi(\cdot,T), \ya\rangle_{\Ya}}{2\sigmaa^{2}}. \nonumber
\end{align}
 
\begin{remark} \label{rmk:Misfit}
In \change{the setting} where the data space is finite-dimensional and the noise is Gaussian, one usually considers a potential of the form
\begin{align*}
\hat{\Phi}_a(u;\ya) = \frac{1}{2}\normYa{\Gammaa^{-1/2}(\Ga(u) - \ya)}^2
\end{align*}
instead of $\Phia$ in \eqref{likelihood}.
However, this choice would not induce a correct data-generating measure in cases where $\Ya$ is infinite-dimensional, see  \cite[Rmk.~3.8]{Stuart} for more details.
In a finite-dimensional setting it can be shown that potentials of the forms $\hat{\Phi}_a$ and $\Phia$ lead to an equivalent Bayesian analysis.  
\end{remark}

For setting $(b)$ the forward response operator $\Gb$ maps from $X$ to $\Yb := \R^J$. 
The conditional distribution of $\yb$ given $u$ is the Gaussian measure $\mathrm{N}(\Gb(u), \Gammab)$.
We define the noise covariance by $\Gammab = \mathrm{diag}(\sigmab^2(t_j) : j = 1,\dots,J)$. 
Here, $\sigmab^2(t_j)$ is the noise variance of the measurement at time $t_j$, $j=1,\dots,J$. 
The noises at different points in time are stochastically independent.
Since the data space $\Yb$ is finite-dimensional, we can define the likelihood using the PDF of the multivariate Gaussian measure w.r.t.~the Lebesgue measure, which has also been employed in \cite[\S 3.2.2]{Collis_tutorial}.
However, for reasons of consistency and brevity in the following discussion, we again consider the PDF of $\yb$ given $u$ w.r.t.~the probability measure of the noise.
This PDF is well-defined since the noise covariance matrix $\Gammab$ is invertible and $\Yb$ is finite-dimensional. 
We arrive at
\begin{align}\label{likelihood_Collis}
\Lb(\yb|u) &:= \exp (-\Phib(u;\yb) ),\\  \Phib(u;\yb) &:= \frac{1}{2}\normYb{\Gammab^{-1/2}\Gb(u)}^2-\langle \Gb(u),\Gammab^{-1} \yb\rangle_{\Yb} := \sum_{j=1}^J\frac{\lb_j(\varphi)^2 - 2\yb_j\lb_j(\varphi)}{2\sigmab^2(t_j)}.
\end{align}
The likelihood in \eqref{likelihood_Collis} and the likelihood in \cite[\S 3.2.2]{Collis_tutorial} lead to identical estimation results (cf.~\Cref{rmk:Misfit}).
Finally, we note that when we discuss either of the cases $(a)$ or $(b)$ we drop the subscripts and write
\begin{align*}
L(y|u) &:= \exp \left ( - \Phi(u;y) \right ) \quad \text{ where } \quad  \Phi(u;y) := \frac{1}{2}\normY{\Gamma^{-1/2}\G(u)}^2-\langle \G(u),\Gamma^{-1} y\rangle_{Y}.
\end{align*}

\subsection{Properties of the forward model}\label{sec:fm}
We use the notation $L^p(\Omega)$ and $W^{k,p}(\Omega)$ for $p \in [1,\infty]$, $k > 0$, to denote the standard Lebesgue and Sobolev spaces.  
For $p = 2$ we use $H^k(\Omega) := W^{k,2}(\Omega)$, and we may drop the dependence on $\Omega$ when there is no ambiguity. 
The dual space of a Banach space $Z$ is denoted by $Z'$, and the duality pairing between $Z$ and $Z'$ is denoted by $\inner{\cdot}{\cdot}_{Z}$.  
The Bochner space $L^p(0,T;Z)$ for any $p \in [1,\infty]$ may be denoted as $L^p(Z)$.  
Due to the Neumann boundary conditions, we introduce the function space
\begin{align*}
H^2_n(\Omega) := \{ f \in H^2(\Omega) \, : \,  \pdnu f  = 0 \text{ on } \pd \Omega \}.
\end{align*}
For fixed positive constants $\PP_{\infty}$, $\chi_{\infty}$ and $\CC_{\infty}$ we define the parameter space $X$ as
\begin{align}\label{defn:X}
X := [0, \PP_{\infty}] \times [0,\chi_{\infty}] \times [0,\CC_{\infty}].
\end{align}
Now we state some useful properties of the forward solution operator $u \mapsto (\varphi, \mu, \sigma)$ obtained under the following assumptions:
\begin{enumerate}[label=$(\mathrm{A \arabic*})$, ref = $\mathrm{A \arabic*}$]
\item \label{ass:para} The parameters $(\PP, \chi, \CC) \in X$ and are constant in space and in time.
\item \label{ass:hfg} The functions $h, f$ and $g$ belonging to $C^2(\R)$ are bounded with bounded derivatives, and $h(s) \geq 0$ for all $s \in \R$.
\end{enumerate}
together with either
\begin{enumerate}[label=$(\mathrm{A3})$,ref = $\mathrm{A3}$]
\item \label{ass:C4} 
\begin{enumerate}[label=$(\mathrm{\roman*})$]
\item $\Omega \subset \R^{2}$ is a bounded domain with $C^{4}$-boundary $\pd \Omega$.
\item The mobility $m \in C^2(\R)$ is bounded with bounded derivatives and there exist $n_0$, $n_1 > 0$ such that $n_0 \leq m(s) \leq n_1$ for all $s \in \R$.
\item The initial conditions $\varphi_{0}, \sigma_{0}$ belong to $H^{3}(\Omega) \cap H^{2}_{n}(\Omega)$.
\item The potential $\Psi \in C^{3}(\R)$ is non-negative and there exist positive constants $R_{1}, \dots, R_{5}$ such that for all $s,t \in \R$, $l = 3$, and $k  \in \{1,2,3\}$,
\begin{equation}\label{Psi:prop}
\begin{aligned}
\Psi(s) \geq R_{1} \abs{s}^{2} - R_{2}, \, \abs{\Psi'(s)}  & \leq R_{3}(1 + \Psi(s)), \, |\Psi^{(l)}(s)| \leq R_{4}(1+\abs{s}^{q}), \\
|\Psi^{(k)}(s) - \Psi^{(k)}(t)| &  \leq R_{5} (1 + \abs{s}^{r-k+1} + \abs{t}^{r-k+1}) \abs{s - t}
\end{aligned}
\end{equation}
for some exponents $q \in [1,\infty)$ and $r \in [2,\infty)$.
\end{enumerate}
\end{enumerate}
or
\begin{enumerate}[label=$(\mathrm{A4})$, ref = $\mathrm{A4}$]
\item \label{ass:poly} 
\begin{enumerate}[label=$(\mathrm{\roman*})$]
\item $\Omega \subset \R^2$ is a convex domain with polygonal boundary.
\item The mobility $m$ is constant (w.l.o.g.~$m = 1$).
\item The initial conditions $\varphi_{0}, \sigma_{0}$ belong to $H^{2}_n(\Omega)$.
\item The potential $\Psi \in C^2(\R)$ is non-negative and there exist positive constants $R_1, \dots, R_5$ such that for all $s, t \in \R$, $l = 2$, and $k \in \{1,2\}$, the property \eqref{Psi:prop} holds 
for some exponents $q \in [1,\infty)$ and $r \in [1,\infty)$.
\end{enumerate}
\end{enumerate}


\change{
Note that assumption (A1) is biologically reasonable since the PDE system models a relatively young tumour where nutrients are plentiful.  
Hence we expect that all tumour cells evolve in the same manner, leading to constant rates of proliferation and nutrient consumption.  
Regarding the assumptions on the boundary of the domain, we have in mind the situation where tumour growth data can be obtained from medical imaging (square photographs that have Lipschitz boundaries - (A4i)) or from experiments (petri dishes that have smooth boundaries - (A3i)).  
The remaining assumptions (A2), (A3ii--iv) and (A4ii--iv) are technical assumptions, and are required in the forthcoming proofs.
}

We introduce the following notation for the case where $\Omega$ is a $C^4$-domain and the mobility $m$ depends on $\varphi$:
\begin{align*}
\AA_1 & := L^{\infty}(H^{3} ) \cap L^{2}(H^{4}) \cap H^{1}(L^{2}), \; \AA_2  := L^{\infty}(H^{1}) \cap L^{2}(H^{3}), \; \AA_3 := L^{\infty}(H^{3}) \cap H^{1}(H^{2}_n), \\
\BB_{1}  & := L^{\infty}(H^{2}_n) \cap L^{2}(H^{4}) \cap H^{1}((H^{1})'), \;  \BB_{2} := L^{\infty}(L^{2}) \cap L^{2}(H^{2}_n), \;
\BB_{3} := L^{\infty}(H^{1}) \cap L^{2}(H^{2}_n),
\end{align*}
and correspondingly the following notation for the case where $\Omega$ is a polygonal domain and the mobility $m = 1$ is constant:
\begin{align*}
\WW_1 &:= L^{\infty}(H^2_n) \cap H^1(L^2), \; \WW_2 :=  L^{\infty}(L^2) \cap L^2(H^2_n) \cap H^1((H^2_n)'), \; \WW_{3} :=  H^1(H^2_n), \\
 \Z_{1}  & := L^{\infty}(H^2_n) \cap H^1(L^2), \;  \Z_{2}  := L^{\infty}(L^2) \cap L^2(H^2_n), \; \Z_{3} := L^{\infty}(H^1) \cap L^2(H^2_n).
\end{align*}
The strong well-posedness of \eqref{CH} is formulated as follows.
\begin{lemma}[Strong well-posedness]
\label{lem:well-posed}
Under assumptions \eqref{ass:para}, \eqref{ass:hfg} and \eqref{ass:C4}, for any $u = (\PP, \chi, \CC) \in X$ and any fixed but arbitrary constant $T > 0$, there exists a unique triplet of functions $(\varphi, \mu, \sigma) \in \AA_1 \times \AA_2 \times \AA_3$ that is a strong solution of \eqref{CH} satisfying the initial and boundary conditions, and the estimate
\begin{align}\label{bound:2}
\norm{\varphi}_{\AA_{1}} + \norm{\mu}_{\AA_{2}} + \norm{\sigma}_{\AA_{3}} \leq C_{*} (1 + \abs{u}),
\end{align}
for some positive constant $C_*$ not depending on $(\varphi, \mu, \sigma)$.  Furthermore, let $\{(\varphi_i, \mu_i, \sigma_i)\}_{i = 1,2}$ denote two strong solutions of \eqref{CH} corresponding to parameters $\{\varphi_{0,i}, \sigma_{0,i}, \PP_{i}, \chi_{i}, \CC_{i}\}_{i=1,2}$ but with the same initial data.  Then, there exists a positive constant $C^* > 0$ not depending on the differences $\varphi_{1} - \varphi_{2}$, $\mu_{1} - \mu_{2}$, $\sigma_{1} - \sigma_{2}$, $\PP_{1} - \PP_{2}$, $\chi_{1} - \chi_{2}$ and $\CC_{1} - \CC_{2}$ such that
\begin{equation}\label{ctsdep}
\begin{aligned}
& \norm{\varphi_{1} - \varphi_{2}}_{\BB_{1}} + \norm{\mu_{1} - \mu_{2}}_{\BB_{2}} + \norm{\sigma_{1} - \sigma_{2}}_{\BB_{3}} \\
& \quad \leq C^* \left (\abs{\PP_{1} - \PP_{2}} + \abs{\chi_{1} - \chi_{2}} + \abs{\CC_{1} - \CC_{2}} \right ).
\end{aligned}
\end{equation}
Analogously, if \eqref{ass:C4} is replaced by \eqref{ass:poly}, then we replace $\AA_i$ with $\WW_i$ and $\BB_i$ with $\Z_i$ for $i = 1,2,3$.
\end{lemma}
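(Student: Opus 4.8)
The plan is to construct the strong solution by a Galerkin approximation in the eigenfunctions of the Neumann Laplacian (which respect the boundary conditions \eqref{noflux}), to derive a priori estimates that are uniform in the discretisation parameter, and to pass to the limit by compactness (Aubin--Lions). The existence and uniqueness, together with the regularity $(\varphi,\mu,\sigma)\in\AA_1\times\AA_2\times\AA_3$ under \eqref{ass:C4} (respectively $\WW_1\times\WW_2\times\WW_3$ under \eqref{ass:poly}), follow the scheme already developed in \cite{KL}, so I would concentrate the effort on the two quantitative estimates \eqref{bound:2} and \eqref{ctsdep}, which are the new ingredients needed for the Bayesian analysis. An essential structural fact throughout is that $\Omega\subset\R^2$: the embedding $H^1(\Omega)\hookrightarrow L^p(\Omega)$ for every $p<\infty$ and the Gagliardo--Nirenberg inequalities let me absorb the polynomial nonlinearities governed by \eqref{Psi:prop}.

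For the bound \eqref{bound:2}, I would first test \eqref{sigma} with $\sigma$ and use $h\geq 0$ from \eqref{ass:hfg} to bound $\sigma$ in $L^\infty(L^2)\cap L^2(H^1)$ in terms of the data only. The basic energy estimate comes from testing \eqref{varphi} with $\mu$ and \eqref{mu} with $\varphi_t$ and adding: this produces the dissipation of the Ginzburg--Landau energy $\int_\Omega(\tfrac{\eps}{2}\abs{\nabla\varphi}^2+\eps^{-1}\Psi(\varphi))\dx$ up to the two source contributions carrying $\chi$ and $\PP$. These are handled by Young's inequality, absorbing a fraction of $\int_\Omega m(\varphi)\abs{\nabla\mu}^2\dx$ into the left-hand side and using the bound on $\sigma$ and the boundedness of $f,g$; the mean of $\mu$ is recovered from $\overline{\mu}=\abs{\Omega}^{-1}\int_\Omega\eps^{-1}\Psi'(\varphi)\dx$ together with $\abs{\Psi'(s)}\leq R_3(1+\Psi(s))$. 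Gronwall's lemma then controls $\varphi$ in $L^\infty(H^1)$ and $\nabla\mu$ in $L^2(L^2)$. The strong regularity is reached by bootstrapping: elliptic regularity applied to $-\eps\Lap\varphi=\mu-\eps^{-1}\Psi'(\varphi)$ upgrades $\varphi$ — on a $C^4$-domain under \eqref{ass:C4}, and on a convex polygon under \eqref{ass:poly}, where convexity is exactly what supplies the $H^2_n$-regularity and explains why that case is restricted to constant mobility — after which testing the higher-order and time-differentiated equations (e.g.\ against $\Lap^2\varphi$ and $\sigma_t$) closes the estimates in $\AA_i$ (resp.\ $\WW_i$). Because each occurrence of $(\PP,\chi,\CC)$ enters only through the bounded functions $f,g,h$, the resulting bound depends at most polynomially on $\abs{u}$, and since $X$ in \eqref{defn:X} is bounded this can be rewritten in the stated form $C_*(1+\abs{u})$.

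The estimate \eqref{ctsdep}, which simultaneously yields uniqueness, is the main obstacle. Writing $\varphi:=\varphi_1-\varphi_2$, $\mu:=\mu_1-\mu_2$, $\sigma:=\sigma_1-\sigma_2$, the difference of the flux in \eqref{varphi} splits as $m(\varphi_1)\nabla(\mu-\chi_1\sigma)$, a mobility-difference term with factor $m(\varphi_1)-m(\varphi_2)$, and parameter-difference terms proportional to $\chi_1-\chi_2$; the difference of \eqref{mu} contributes $\mu=-\eps\Lap\varphi+\eps^{-1}(\Psi'(\varphi_1)-\Psi'(\varphi_2))$, and \eqref{sigma} gives the analogous difference equation for $\sigma$. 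The delicate nonlinear terms are $\Psi'(\varphi_1)-\Psi'(\varphi_2)$, which I would bound by the Lipschitz estimate in \eqref{Psi:prop} with $k=1$, together with $m(\varphi_1)-m(\varphi_2)$ and the source differences, all estimated by the mean value theorem and the bounded derivatives of $m,f,g,h$; the polynomial factors $\abs{\varphi_i}^r$ are then absorbed using the a priori bounds from \eqref{bound:2} and the 2D Sobolev embeddings. Testing the $\varphi$-difference equation against the inverse Neumann Laplacian of its mean-zero part (treating the nonzero mass created by the $\PP$-source separately), together with the $\mu$-relation and the $\sigma$-difference equation tested by $\sigma$, yields a differential inequality whose right-hand side is integrable in time and carries the squared parameter differences $\abs{\PP_1-\PP_2}^2+\abs{\chi_1-\chi_2}^2+\abs{\CC_1-\CC_2}^2$. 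Gronwall's lemma then gives \eqref{ctsdep} in the weaker norms $\BB_i$ (resp.\ $\Z_i$), and the $L^2(H^4)$ control of the $\varphi$-difference is recovered at the very end by viewing the difference equation as a linear fourth-order equation whose right-hand side is now bounded in $L^2(L^2)$.
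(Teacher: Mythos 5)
Your overall architecture for existence, uniqueness and the bound \eqref{bound:2} (Galerkin approximation in Neumann--Laplacian eigenfunctions, energy estimates, elliptic bootstrapping, Aubin--Lions) coincides with what the paper actually does, namely deferring these points to the earlier work \cite{KL}; that part is unobjectionable. The genuine gap lies in your treatment of the continuous dependence estimate \eqref{ctsdep}. Writing $\hat z := z_1 - z_2$ for the differences, $\hat\Psi' := \Psi'(\varphi_1)-\Psi'(\varphi_2)$, and $\mathcal{Y} := \abs{\PP_1-\PP_2}^2+\abs{\chi_1-\chi_2}^2+\abs{\CC_1-\CC_2}^2$, the testing strategy you propose --- the $\varphi$-difference equation paired with the inverse Neumann Laplacian of its mean-zero part, combined with the $\mu$-relation and the $\sigma$-difference equation tested by $\hat\sigma$ --- yields, after Gronwall, at best the \emph{lower-order} estimate $\norm{\hat \varphi}_{L^{\infty}(H^1)}^2 + \norm{\hat \varphi}_{H^1((H^1)')}^2 + \norm{\hat \mu}_{L^2(H^1)}^2 + \norm{\hat \sigma}_{\Z_{3}}^2 \leq C \mathcal{Y}$. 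This is precisely the estimate \eqref{ctsLipdom} that the paper imports from \cite{KL} as its \emph{starting point}; it is not \eqref{ctsdep}, and your claim that this Gronwall argument already delivers ``the weaker norms $\BB_i$ (resp.\ $\Z_i$)'' conflates the two: $\BB_i$, $\Z_i$ \emph{are} the target norms. They demand strictly more, namely $\hat\varphi \in L^{\infty}(H^2_n)\cap H^1(L^2)$ (plus $L^2(H^4)$ under \eqref{ass:C4}) and, crucially, $\hat\mu \in L^{\infty}(L^2)\cap L^2(H^2_n)$. Since $\mu$ possesses no evolution equation of its own, no testing of the undifferentiated difference system can produce an $L^{\infty}$-in-time bound on $\hat\mu$ in $L^2(\Omega)$. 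Moreover, your closing step --- recovering the $L^2(H^4)$ control by reading the difference system as a linear fourth-order equation --- is circular: the right-hand side of $\eps\Lap^2\hat\varphi = -\hat\varphi_t - \widehat\chi\Lap\sigma_1 - \chi_2\Lap\hat\sigma + \eps^{-1}\Lap\hat\Psi' + \dots$ can only be placed in $L^2(L^2)$ if one already controls $\hat\varphi_t$ in $L^2(L^2)$, i.e.\ exactly the $H^1(L^2)$ bound that your argument has not supplied.

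The missing device is the step the paper adds on top of \cite{KL}. One first secures the auxiliary regularity $\hat\mu \in H^1(0,T;H^2_n(\Omega)')$ (by differentiating \eqref{mu} in time at the Galerkin level), which legitimises the following computation. One then differentiates the difference relation \eqref{diff:mu} in time and tests it with $\hat\mu$, tests the difference equation \eqref{diff:varphi} with $\hat\varphi_t$, and \emph{adds} the two identities so that the otherwise uncontrollable coupling term $\int_\Omega \hat\varphi_t \Lap \hat\mu \dx$ cancels. Feeding in the bounds $\norm{\hat\Psi'}_{L^{\infty}(L^2)}^2 + \norm{\nabla \hat\Psi'}_{L^{\infty}(L^2)}^2 \leq C\mathcal{Y}$ --- which follow from \eqref{ctsLipdom}, the Lipschitz property in \eqref{Psi:prop}, and the facts $\varphi_i \in L^{\infty}(Q)$, $\nabla\varphi_2 \in L^{\infty}(L^4)$ provided by \eqref{bound:2} and the two-dimensional embedding $H^2(\Omega)\subset L^{\infty}(\Omega)\cap W^{1,4}(\Omega)$ --- and applying Gronwall gives $\norm{\hat\mu}_{L^{\infty}(L^2)}^2 + \norm{\hat\varphi}_{H^1(L^2)}^2 \leq C\mathcal{Y}$. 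Only after this do elliptic regularity for \eqref{diff:mu} (yielding $\hat\varphi \in L^{\infty}(H^2)$) and for $-\Lap\hat\mu = -\hat\varphi_t - \widehat\chi \Lap \sigma_1 - \chi_2 \Lap\hat\sigma + \widehat\PP f_1g_1 + \PP_2 \hat f g_1 + \PP_2 f_2 \hat g$ (yielding $\hat\mu \in L^2(H^2)$, and then the fourth-order bound under \eqref{ass:C4}) become available. Without this time-differentiation-and-cancellation argument, or an equivalent substitute, your plan cannot reach the norms $\BB_i$, $\Z_i$ stated in \eqref{ctsdep}.
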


\begin{proof}
For the details where \eqref{ass:C4} is assumed, i.e., $\Omega$ is a $C^4$-domain and the mobility $m$ need not be a constant, we refer the reader to the earlier work \cite{KL}.  We note that the corresponding assertions for the case where \eqref{ass:poly} is assumed, i.e., $\Omega$ is a convex polygonal domain and the mobility is taken to be constant, is actually an improvement to \cite[Thm.~8]{KL} with new continuous dependence results for $\varphi$ in $L^{\infty}(0,T;H^2(\Omega)) \cap H^1(0,T;L^2(\Omega))$ and for $\mu$ in $L^{\infty}(0,T;L^2(\Omega)) \cap L^2(0,T;H^2(\Omega))$.  These new results are needed for problem setting $(a)$, cf.~\Cref{rem:GaGbreg} below, and we only sketch the new details.

The new regularity result $\mu \in H^1(0,T;H^2_n(\Omega)')$ can be obtained by differentiating \eqref{mu} in time at the Galerkin level, testing with an arbitrary test function $\zeta \in H^2_n(\Omega)$, and employing previously established estimates for $\varphi$ in $L^{\infty}(0,T;H^2(\Omega)) \cap H^1(0,T;L^2(\Omega))$ from \cite[(56)]{KL}, and then passing to the limit in the Galerkin approximation.

For the new continuous dependence result, we use the notation $\theta_i = \theta(\varphi_i)$ for $\theta \in \{h, f, g, \Psi'\}, i = 1,2$, and $\hat z = z_1- z_2$ for $z \in \{ \varphi, \mu, \sigma, \PP, \chi, \CC, f, h, g, \Psi'\}$, and set $\mathcal{Y} := |\widehat \chi|^2 + |\widehat \PP|^2 + |\widehat \CC|^2$.  Then, our starting point is the following estimate obtained as a consequence of \cite[(11)-(14),(18)]{KL}:
\begin{align}\label{ctsLipdom}
\change{\norm{\hat \varphi}_{L^{\infty}(0,T;H^1(\Omega))}^2 + \norm{\hat \varphi}_{H^1(0,T;H^1(\Omega)')}^2 + \norm{\hat \mu}_{L^2(0,T;H^1(\Omega))}^2 + \norm{\hat \sigma}_{\Z_{3}}^2} \leq C \mathcal{Y}
\end{align}
for some positive constant $C$ not depending on the differences $\hat \varphi$, $\hat \mu$, $\hat \sigma$, $\widehat \PP$, $\widehat \chi$ and $\widehat \CC$.  To obtain the new continuous dependence estimates, let us note that thanks to the embedding $H^2(\Omega) \subset L^{\infty}(\Omega) \cap W^{1,4}(\Omega)$, we have $\varphi_{1}, \varphi_{2} \in L^{\infty}(Q)$ and $\nabla \varphi_2 \in L^{\infty}(0,T;L^4(\Omega))$, leading to
\begin{align*}
\norm{\hat \Psi'}_{L^{\infty}(0,T;L^2(\Omega))}^2 & \leq C \Big ( 1 + \norm{\varphi_1}_{L^{\infty}(Q)}^{2r} + \norm{\varphi_2}_{L^\infty(Q)}^{2r} \Big ) \norm{\hat \varphi}_{L^{\infty}(0,T;L^2(\Omega))}^2 \leq C \mathcal{Y}, \\
\norm{\nabla \hat \Psi'}_{L^{\infty}(0,T;L^2(\Omega))}^2 & \leq C \int_\Omega \abs{\Psi''(\varphi_1)}^2 \abs{\nabla \hat \varphi}^2 + \abs{\nabla \varphi_2}^2 \abs{\Psi''(\varphi_1) - \Psi''(\varphi_2)}^2 \dx \\
& \leq C \norm{\nabla \hat \varphi}_{L^{\infty}(0,T;L^2(\Omega))}^2 + C\norm{\nabla \varphi_2}_{L^{\infty}(0,T;L^4(\Omega))}^2 \norm{\hat \varphi}_{L^{\infty}(0,T;L^4(\Omega))}^2 \leq C \mathcal{Y}.
\end{align*}
From the equations fulfilled by the differences
\begin{subequations}
\begin{alignat}{3}
\hat \varphi_{t} & = \Lap \hat \mu - \widehat \chi \Lap \sigma_1 - \chi_2 \Lap \hat \sigma + \widehat \PP f_1 g_1 + \PP_2 \hat f g_1 + \PP_2 f_2 \hat g, \label{diff:varphi} \\
\hat \mu & = \hat \Psi' - \Lap \hat \varphi, \label{diff:mu}
\end{alignat}
\end{subequations}
we apply elliptic regularity to \eqref{diff:mu} to deduce that 
\begin{align}\label{ctsH2}
\norm{\hat \varphi}_{L^2(0,T;H^2(\Omega))}^2 \leq C \Big ( \norm{\hat \mu - \hat \Psi'}_{L^2(0,T;L^2(\Omega))}^2 + \norm{\hat \varphi}_{L^2(0,T;H^1(\Omega))}^2 \Big ) \leq C \mathcal{Y}.
\end{align}
Next, differentiating \eqref{diff:mu} in time and testing with $\hat \mu$ (possible thanks to the fact that $\hat \mu \in L^2(0,T;H^2_n(\Omega)) \cap H^1(0,T;H^2_n(\Omega)')$) leads to
\begin{align*}
\frac{1}{2} \frac{d}{dt} \norm{\hat \mu}_{L^{2}(\Omega)}^2 & = \inner{\hat \mu_t}{\hat \mu}_{H^2_n} = \int_{\Omega} (\Psi''(\varphi_1) - \Psi''(\varphi_2)) \varphi_{1,t} \hat \mu + \Psi''(\varphi_2) \hat \varphi_t \hat \mu - \hat \varphi_{t} \Lap \hat \mu \dx \\
& \leq C \Big ( \norm{\hat \varphi}_{L^{\infty}(\Omega)} \norm{\varphi_{1,t}}_{L^2(\Omega)} + \norm{\hat \varphi_t}_{L^2(\Omega)} \Big )\norm{\hat \mu}_{L^2(\Omega)} - \int_\Omega \hat \varphi_t \Lap \hat \mu \dx,
\end{align*}
where we used that $\varphi_1, \varphi_2, \Psi''(\varphi_2) \in L^{\infty}(Q)$.  Testing \eqref{diff:varphi} with $\hat \varphi_{t}$ yields
\begin{align*}
\norm{\hat \varphi_{t}}_{L^{2}(\Omega)}^2 & = \int_{\Omega} \hat \varphi_{t} \Big ( \Lap \hat \mu - \widehat \chi \Lap \sigma_{1} - \chi_{2} \Lap \hat \sigma + \widehat P f_1 g_1 + \PP_2 \hat f g_2 + \PP_2 f_2 \hat g \Big ) \dx \\
& \leq \int_\Omega \hat \varphi_t \Lap \hat \mu \dx + C \Big (\abs{\widehat \chi}^2 + \norm{\hat \sigma}_{H^2(\Omega)}^2 + |\widehat \PP|^2 + \norm{\hat \varphi}_{L^2(\Omega)}^2 \Big ) + \frac{1}{2} \norm{\hat \varphi_t}_{L^2(\Omega)}^2,
\end{align*}
where we used the boundedness and Lipschitz continuity of $f$ and $g$ from \eqref{ass:hfg}, as well as the fact that $\sigma_1 \in \WW_3$.  Then, adding these two inequalities to cancel the common term involving $\hat \varphi_t \Lap \hat \mu$, applying Gronwall's inequality, \eqref{ctsLipdom} and \eqref{ctsH2} leads to
\begin{align*}
\norm{\hat \mu}_{L^{\infty}(0,T;L^2(\Omega))}^2 + \norm{\hat \varphi}_{H^1(0,T;L^2(\Omega))}^2 \leq C \mathcal{Y},
\end{align*}
and in turn, recalling $\norm{\hat \Psi'}_{L^{\infty}(0,T;L^2(\Omega))}^2 \leq C \mathcal{Y}$ we obtain via elliptic regularity
\begin{align*}
\norm{\hat \varphi}_{L^{\infty}(0,T;H^2(\Omega))}^{\change{2}} \leq C \mathcal{Y}.
\end{align*}
The assertion $\norm{\hat \mu}_{L^2(0,T;H^2(\Omega))}^2 \leq C \mathcal{Y}$ follows from elliptic regularity applied to 
\begin{align*}
-\Lap \hat \mu = - \hat \varphi_t - \widehat \chi \Lap \sigma_1 - \chi_2 \Lap \hat \sigma + \widehat \PP f_1 g_1 + \PP_2 \hat f g_1 + \PP_2 f_2 \hat g,
\end{align*}
where the right-hand side in the $L^2(0,T;L^2(\Omega))$-norm is bounded by $C \mathcal{Y}^{1/2}$.  This completes the proof.
\end{proof}

The interesting consequence for the Bayesian inverse problem \eqref{obs:data:com} is the following.

\begin{corollary}\label{cor:G:prop}
Under \eqref{ass:para}, \eqref{ass:hfg} and either \eqref{ass:C4} or \eqref{ass:poly}, for both settings $(a)$ and $(b)$, there exist constants $E, F > 0$ such that the forward solution operator $\G : X \to Y$ satisfies
\begin{align}
\normY{\G(u)} & \leq  E( 1 + \abs{u}) \label{G:upp}, \\
\normY{\G(u_{1})- \G(u_{2})} & \leq F \abs{u_{1} - u_{2}} \label{G:diff}
\end{align}
for $u, u_1, u_2 \in X$.
\end{corollary}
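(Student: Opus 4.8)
The plan is to read $\G$ as the composition of the forward solution map $u \mapsto \varphi$, whose quantitative behaviour is supplied by \Cref{lem:well-posed}, with the linear observation functionals $l_j$, and then to transfer the Bochner-space bounds \eqref{bound:2} and \eqref{ctsdep} into pointwise-in-time statements. The decisive preliminary step is a continuous embedding of the relevant solution spaces into $C([0,T];L^2(\Omega))$, so that the time traces $\varphi(\cdot,T)$ and $\varphi(\cdot,t_j)$ are well defined and bounded. Under either \eqref{ass:C4} or \eqref{ass:poly} the space containing $\varphi$ (namely $\AA_1$ or $\WW_1$) has the summand $H^1(0,T;L^2(\Omega))$, and the embedding $H^1(0,T;L^2(\Omega)) \hookrightarrow C([0,T];L^2(\Omega))$ together with \eqref{bound:2} yields
\[
\sup_{t\in[0,T]}\norm{\varphi(\cdot,t)}_{L^2(\Omega)} \leq c\,\norm{\varphi}_{\AA_1} \leq c\,C_*(1+\abs{u}).
\]
I would then record the analogous trace bound for the difference $\hat\varphi := \varphi_1-\varphi_2$: in the polygonal case $\hat\varphi \in \Z_1 = L^\infty(H^2_n)\cap H^1(L^2)$ embeds directly, while in the $C^4$ case $\hat\varphi\in\BB_1 = L^\infty(H^2_n)\cap L^2(H^4)\cap H^1((H^1)')$ still lies in $C([0,T];L^2(\Omega))$ by interpolating $L^2(H^4)$ against $H^1((H^1)')$. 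Combined with \eqref{ctsdep} this gives
\[
\sup_{t\in[0,T]}\norm{\hat\varphi(\cdot,t)}_{L^2(\Omega)} \leq c\,C^*\bigl(\abs{\PP_1-\PP_2}+\abs{\chi_1-\chi_2}+\abs{\CC_1-\CC_2}\bigr) \leq c'\,\abs{u_1-u_2}.
\]

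With these two trace estimates in hand, both settings follow by applying the functionals. In setting $(a)$ one has $\Ga(u)=\varphi(\cdot,T)$ and $\normY{\cdot}=\norm{\cdot}_{L^2(\Omega)}$, so \eqref{G:upp} and \eqref{G:diff} are immediate with $E=cC_*$ and $F=c'$. In setting $(b)$ the functionals $\lb_j(\varphi)=\tfrac12\int_\Omega(\varphi(\cdot,t_j)+1)\dx$ are affine, so I would invoke H\"older's inequality $\norm{f}_{L^1(\Omega)}\leq\abs{\Omega}^{1/2}\norm{f}_{L^2(\Omega)}$ to obtain $\abs{\lb_j(\varphi)}\leq\tfrac12\bigl(\abs{\Omega}^{1/2}\norm{\varphi(\cdot,t_j)}_{L^2(\Omega)}+\abs{\Omega}\bigr)$ and, since the additive constant cancels in the difference, $\abs{\lb_j(\varphi_1)-\lb_j(\varphi_2)}\leq\tfrac12\abs{\Omega}^{1/2}\norm{\hat\varphi(\cdot,t_j)}_{L^2(\Omega)}$. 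Taking the Euclidean norm over $j=1,\dots,J$ contributes a factor $\sqrt J$, and substituting the trace bounds above produces \eqref{G:upp} and \eqref{G:diff}, with the term $\tfrac12\abs{\Omega}$ absorbed into the additive $1$ in \eqref{G:upp}.

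The only genuinely delicate point is the time-trace continuity for the difference in the smooth-domain case, where $\hat\varphi_t$ is controlled only in the negative-order space $(H^1)'$ rather than in $L^2$; there I would appeal to the Lions--Magenes interpolation result to conclude $\hat\varphi\in C([0,T];H^{3/2}(\Omega))\hookrightarrow C([0,T];L^2(\Omega))$. Everything else is routine bookkeeping of constants, and it is worth noting that the improved continuous-dependence regularity of $\hat\varphi$ in $L^\infty(H^2_n)\cap H^1(L^2)$ established in \Cref{lem:well-posed} for the polygonal case is exactly what makes the setting-$(a)$ estimate \eqref{G:diff} available with no interpolation at all.
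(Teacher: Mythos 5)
Your proof is correct, but it routes the key technical step differently from the paper. The paper transfers the bounds of \Cref{lem:well-posed} through a single embedding, namely \eqref{embedding}, $L^{\infty}(0,T;H^{2}(\Omega)) \cap H^{1}(0,T;H^{1}(\Omega)') \subset\subset C^{0}([0,T];C^{0}(\overline{\Omega}))$, which exploits the two-dimensional Sobolev embedding $H^2(\Omega)\subset C^0(\overline{\Omega})$; both \eqref{bound:2} and \eqref{ctsdep} are thereby upgraded to sup-norm-in-space-and-time statements (the latter recorded as \eqref{phi:diff}), and the $L^2(\Omega)$-bounds then follow by multiplying by $\abs{\Omega}^{1/2}$. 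You instead aim only at the weaker target $C^0([0,T];L^2(\Omega))$, which is indeed all that either data space requires: $H^1(0,T;L^2)\hookrightarrow C([0,T];L^2)$ for the solution bound, the space $\Z_1$ directly for the difference in the polygonal case, and Lions--Magenes interpolation ($\BB_1\hookrightarrow C([0,T];H^{3/2}(\Omega))\hookrightarrow C([0,T];L^2(\Omega))$) for the difference in the $C^4$ case; the treatment of the functionals $\lb_j$ via H\"older and cancellation of the affine constant is routine and correct. Both arguments are sound. Yours is dimension-independent and more economical in regularity; in fact, for the $C^4$ case you could bypass interpolation entirely by the standard Gelfand-triple result $L^2(0,T;H^1)\cap H^1(0,T;(H^1)')\hookrightarrow C([0,T];L^2)$ applied to the weaker estimate \eqref{ctsLipdom} inherited from \cite{KL}, which incidentally shows that the assertion in \Cref{rem:GaGbreg} that setting $(a)$ minimally requires continuous dependence in $C^0([0,T];C^0(\overline{\Omega}))$ is not sharp, since the data space $\Ya=L^2(\Omega)$ only sees spatial $L^2$-traces. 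What the paper's route buys in exchange is that one embedding covers both settings and both estimates at once, and it yields the stronger uniform-in-space statement \eqref{phi:diff} as a by-product.
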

\begin{proof}
For setting $(a)$, using the (compact) embedding 
\begin{align}\label{embedding}
L^{\infty}(0,T;H^{2}(\Omega)) \cap H^{1}(0,T;H^{1}(\Omega)') \subset \subset C^{0}([0,T];C^{0}(\overline{\Omega}))
\end{align}
yields that $\varphi$ belongs to $C^{0}([0,T];C^{0}(\overline{\Omega}))$.  Hence, for our choice $l_{1}(\varphi) = \varphi(\cdot, T)$, which is a function on $\Omega$, or $\{l_{j}(\varphi)\}_{j=1}^{J} = \{\frac{1}{2}\int_{\Omega}1+\varphi(x, t_{j})\mathrm{d}x\}_{j=1}^{J}$, where $\{t_{j}\}_{j=1}^{J}$ is a discrete set of points in $[0,T]$, we infer from \eqref{bound:2} that
\begin{subequations}
\begin{alignat}{3}
\label{Ga} \normYa{\Ga(u)} &\leq \abs{\Omega}^{1/2} \norm{\varphi}_{C^{0}([0,T];C^{0}(\overline{\Omega}))} \leq \abs{\Omega}^{1/2} C_{*}(1 + \abs{u}) = E_a( 1 + \abs{u}),\\
\label{Gb} \normYb{\Gb(u)} &\leq C  (\norm{\varphi}_{C^{0}([0,T];L^{2}(\Omega))}+1) \leq C (C_{*}(1 + \abs{u}) + 1 ) \leq E_b( 1 + \abs{u}),
\end{alignat}
\end{subequations}
for some constants $E_a, E_b > 0$.  Taking $E := \max\{E_a, E_b\}$ leads to \eqref{G:upp}.

Similarly, from the embedding \eqref{embedding} and continuous dependence result \eqref{ctsdep} it holds
\begin{align}\label{phi:diff}
\norm{\varphi_{1} - \varphi_{2}}_{C^{0}([0,T];C^{0}(\overline{\Omega}))} \leq C^{*} \left (\abs{\PP_{1} - \PP_{2}} + \abs{\chi_{1} - \chi_{2}} + \abs{\CC_{1} - \CC_{2}} \right ).
\end{align}
Then, \eqref{G:diff} can be derived in a similar fashion to \eqref{G:upp}.
\end{proof}

\begin{remark}\label{rem:GaGbreg}
For setting $(a)$ the minimum regularity in the continuous dependence result is in the space $C^0([0,T];C^0(\overline{\Omega}))$, which is \eqref{phi:diff}.  However, for setting $(b)$ it is sufficient to have a weaker continuous dependence result in $C^{0}([0,T];L^2(\Omega))$, compare \eqref{Gb}.
\end{remark}

\begin{remark}
To establish the weaker continuous dependence result in $C^0([0,T];L^2(\Omega))$ for problem setting $(b)$ in the presence of a non-constant mobility $m(\varphi)$ comes at a cost of deriving high solution regularities $\varphi \in L^{\infty}(0,T;W^{1,\infty}(\Omega))$ and $\mu \in L^{4}(0,T;W^{1,4}(\Omega))$, which can only be achieved at present with a $C^4$-boundary, see the derivation of \cite[(11)]{KL} for more details.  
\end{remark}

\subsection{Properties of the potential}
The following result shows that the negative log-likelihood $\Phi(u;y)$
defined in \eqref{likelihood} satisfies the assumptions outlined in \cite[Assump.~2.6]{Stuart}, which are
\begin{enumerate}[label=$(\mathrm{B \arabic*})$, ref = $\mathrm{B \arabic*}$]
\item \label{lem:1} For every $\eps > 0$ and $r > 0$, there exists a constant $M = M(\eps, r) \in
\R$ such that for all $u \in X$ and for all $y \in Y$ with $\normY{y} < r$ it
holds $\Phi(u;y) \geq M - \eps \abs{u}^{2}$.
\item \label{lem:2} For every $r > 0$, 
there exists $K = K(r) > 0$ such that for all $u \in X$ and $y \in Y$ with
$\max(\abs{u}, \normY{y}) < r$ it holds $\Phi(u;y) \leq K$.
\item \label{lem:3} For every $r > 0$, there exists an $L = L(r) > 0$ such that for all $u_{1}, u_{2} \in X$ and $y \in Y$ with $\max ( \abs{u_{1}} , \abs{u_{2}}, \normY{y}) < r$, it holds $\abs{\Phi(u_{1};y) - \Phi(u_{2};y)} \leq L \abs{u_{1} - u_{2}}$.
\item \label{lem:4} For every $\eps > 0$ and $r > 0$, there exists $H = H(\eps,r) \in \R$ such that for all $y_{1}, y_{2} \in Y$ with $\max(\normY{y_{1}}, \normY{y_{2}}) < r$ and for every $u \in X$, it holds
$\abs{\Phi(u;y_{1}) - \Phi(u;y_{2})} \leq \exp(\eps \abs{u}^{2} + H) \normY{y_{1} - y_{2}}$.
\end{enumerate}

This is an important step in the proof of the well-posedness of the Bayesian inverse problem, and is required to prove the existence and uniqueness of the posterior measure.

\begin{lemma}
Under \eqref{ass:para}, \eqref{ass:hfg} and either \eqref{ass:C4} or \eqref{ass:poly}, the properties \eqref{lem:1}, \eqref{lem:2}, \eqref{lem:3} and \eqref{lem:4} are satisfied.
\end{lemma}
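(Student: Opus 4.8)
The plan is to treat the four properties as elementary consequences of the explicit form
$$\Phi(u;y) = \tfrac{1}{2}\normY{\Gamma^{-1/2}\G(u)}^2 - \inner{\G(u)}{\Gamma^{-1}y}_Y,$$
combined with the growth and Lipschitz estimates \eqref{G:upp} and \eqref{G:diff} of \Cref{cor:G:prop}. The only structural facts I need about the noise are that in both settings $\Gamma^{-1/2}$ is a bounded linear operator on $Y$: in setting $(a)$ we have $\Gammaa^{-1/2} = \sigmaa^{-1}\mathrm{Id}_{\Ya}$, and in setting $(b)$ the matrix $\Gammab^{-1/2} = \mathrm{diag}(\sigmab(t_j)^{-1})$ is bounded. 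Hence there is a constant $c_\Gamma > 0$ with $\normY{\Gamma^{-1/2}v} \leq c_\Gamma \normY{v}$, and consequently $\normY{\Gamma^{-1}v} \leq c_\Gamma^2 \normY{v}$, for all $v \in Y$. I would fix this notation first; all four estimates then reduce to Cauchy--Schwarz and Young's inequality.

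For \eqref{lem:1} I would discard the nonnegative quadratic term and estimate $\Phi(u;y) \geq -\normY{\G(u)}\,\normY{\Gamma^{-1}y} \geq -c_\Gamma^2 E(1+\abs{u})\,r$ using \eqref{G:upp} and $\normY{y} < r$. Young's inequality $c_\Gamma^2 E r\,\abs{u} \leq \eps\abs{u}^2 + (c_\Gamma^2 E r)^2/(4\eps)$ then gives $\Phi(u;y) \geq M - \eps\abs{u}^2$ with $M = M(\eps,r)$. For \eqref{lem:2} I would bound both terms from above by $\tfrac{1}{2}c_\Gamma^2 \normY{\G(u)}^2 + c_\Gamma^2\normY{\G(u)}\,\normY{y}$, insert \eqref{G:upp} and the constraints $\abs{u}, \normY{y} < r$, and obtain a bound $K(r)$ depending only on $r$.

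For \eqref{lem:3} I would write the difference of the quadratic terms via $\normY{a}^2 - \normY{b}^2 = \inner{a-b}{a+b}_Y$ with $a = \Gamma^{-1/2}\G(u_1)$ and $b = \Gamma^{-1/2}\G(u_2)$, which yields
$$\abs{\Phi(u_1;y) - \Phi(u_2;y)} \leq \tfrac{1}{2}c_\Gamma^2\normY{\G(u_1)-\G(u_2)}\,\normY{\G(u_1)+\G(u_2)} + c_\Gamma^2\normY{\G(u_1)-\G(u_2)}\,\normY{y}.$$
Applying \eqref{G:diff} to the difference factor and \eqref{G:upp} together with $\abs{u_1}, \abs{u_2}, \normY{y} < r$ to the remaining factors produces the bound $L(r)\abs{u_1-u_2}$. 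For \eqref{lem:4} the $y$-independent quadratic term cancels, leaving $\abs{\Phi(u;y_1) - \Phi(u;y_2)} = \abs{\inner{\G(u)}{\Gamma^{-1}(y_1-y_2)}_Y} \leq c_\Gamma^2 E(1+\abs{u})\,\normY{y_1-y_2}$. The exponential form follows from $1+\abs{u} \leq \exp(\abs{u})$ and Young's inequality $\abs{u} \leq \eps\abs{u}^2 + 1/(4\eps)$, so that $c_\Gamma^2 E(1+\abs{u}) \leq \exp(\eps\abs{u}^2 + H)$ with $H = \log(c_\Gamma^2 E) + 1/(4\eps)$.

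I do not expect a genuine obstacle here: all of the analytic difficulty is already absorbed into \Cref{cor:G:prop} (and, beneath it, the strong well-posedness of \eqref{CH}), so this lemma is essentially bookkeeping. The one point requiring care is that in setting $(a)$ the data space $\Ya = L^2(\Omega)$ is infinite-dimensional, so I would verify explicitly that $\Gammaa^{-1/2}$ and $\Gammaa^{-1}$ are bounded operators there — which they are, being nonzero scalar multiples of the identity — guaranteeing that the norm and inner-product terms defining $\Phi$ are well-defined and that $c_\Gamma$ can be chosen uniformly over both settings. With that observation the same chain of estimates applies verbatim to settings $(a)$ and $(b)$.
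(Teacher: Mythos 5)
Your proof is correct and follows essentially the same route as the paper: all four properties are reduced to Cauchy--Schwarz and Young-type estimates built on \eqref{G:upp} and \eqref{G:diff}, together with the boundedness of the spectrum of $\Gamma^{-1}$ (the paper's constants $q, q'$ play exactly the role of your $c_\Gamma^2$). The only cosmetic differences are that the paper settles \eqref{lem:1} using the boundedness of $X$ (i.e., $\abs{u} \leq \abs{u_\infty}$) rather than Young's inequality, and your observation that the $\G$-quadratic term cancels exactly in \eqref{lem:4} is a slightly cleaner version of the paper's corresponding computation.
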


\begin{proof} For convenience, we define $u_\infty := (\PP_\infty, \chi_\infty, \CC_\infty)$ and
\begin{align*}
q := \frac{1}{\max\{ \sigma_a^2, \sigma_b^2(t_1), \dots, \sigma_b^2(t_J)\}}, \quad q' := \frac{1}{\min \{  \sigma_a^2, \sigma_b^2(t_1), \dots, \sigma_b^2(t_J) \}}.
\end{align*}
For \eqref{lem:1}, we observe that $\Gamma^{-1}$ is a strictly positive definite operator with its spectrum bounded below by $q > 0$.  Hence, for any $\xi \in Y$,  it holds that $\langle\xi^{*},\Gamma^{-1} \xi \rangle_Y \geq q \normY{\xi}^{2}$.  Then by \eqref{G:upp}, for $\norm{y}_{Y} < r$ and arbitrary $\eps > 0$, setting $M := - r q E (1 + \abs{u_\infty})$, where $E$ is the constant in \eqref{G:upp}, we have
\begin{align*}
\Phi(u;y) &= \frac{1}{2} \normY{\Gamma^{-1/2}\mathcal{G}(u)}^2 - \langle\G(u),\Gamma^{-1}y \rangle \geq - q\normY{\G(u)}\normY{y} \\
&\geq -rqE(1+\abs{u}) \geq -rqE(1+\abs{u_\infty}) = M \geq M - \eps \abs{u}^{2}.
\end{align*}
For \eqref{lem:2}, note that the spectrum of $\Gamma^{-1}$ is bounded above by $q' < \infty$, we use \eqref{G:upp} to see that
\begin{align*}
\Phi(u;y) &= \frac{1}{2} \normY{\Gamma^{-1/2}\mathcal{G}(u)}^2 - \langle\G(u),\Gamma^{-1}y \rangle \leq \normY{\Gamma^{-1/2}\mathcal{G}(u)}^2 + \frac{1}{2}\normY{\Gamma^{-1/2}y}^2 \\
&\leq q' \normY{\mathcal{G}(u)}^2 + \frac{q'}{2}\normY{y}^2 \leq q' E^2(1+\abs{u})^2 + \frac{q'}{2}\normY{y}^2  \leq q'\left(E^2(1+r)^2 + r^2 \right) =:K(r).
\end{align*}
For \eqref{lem:3}, a short computation shows that
\begin{equation}\label{diff}
\begin{aligned}
2\left (\Phi(u_{1};y) - \Phi(u_{2};y) \right ) 
& = \normY{\Gamma^{-1/2}\mathcal{G}(u_1)}^2 - 2\langle \G(u_1)-\G(u_2),\Gamma^{-1}y \rangle - \normY{\Gamma^{-1/2}\mathcal{G}(u_2)}^2 \\
& = \langle \G(u_1)-\G(u_2), \Gamma^{-1} (\G(u_1)- 2  y + \G(u_2))\rangle_Y .
\end{aligned}
\end{equation}
Using \eqref{G:upp}, \eqref{G:diff}, the Cauchy--Schwarz and triangle inequalities, we arrive at
\begin{align*}
\abs{\Phi(u_{1};y) - \Phi(u_{2};y)} &\leq \frac{1}{2}\normY{\G(u_1)-\G(u_2)}\normY{\Gamma^{-1}(\G(u_1)+\G(u_2)-2y)} \\ 
&\leq F \abs{u_1 - u_2}  q' (E(1+r)+r) =: L(r) \abs{u_{1} - u_{2}}.
\end{align*}
For \eqref{lem:4}, we obtain after a short computation
\begin{align*}
2 (\Phi(u;y_{1}) - \Phi(u;y_{2})) = \langle y_2-y_1, \Gamma^{-1}(2\G(u)-y_1-y_2)\rangle_Y.
\end{align*}
Then, due to $\max(\normY{y_{1}}, \normY{y_{2}}) < r$ and \eqref{G:upp}, we see that
\begin{align*}
\abs{\Phi(u;y_{1}) - \Phi(u;y_{2})} & \leq q' ( E(1 + \abs{u}) + r) \normY{y_{1} - y_{2}} \leq (\eps \abs{u}^2 + C) \normY{y_{1} - y_{2}} \\
& \leq \exp(\eps \abs{u}^{2}+C) \normY{y_{1} - y_{2}},
\end{align*}
for some positive constant $C=C(\eps, q', E, r)$.  This finishes the proof.
\end{proof}

\section{Bayesian inversion}\label{sec:Bayesian}
Recall that the parameter vector $u=(\mathcal{P},\chi,\mathcal{C}) \in X \subset \mathbb{R}^3$ is contained in a finite-dimensional space.
Let $\mu_{0}$ denote a prior probability measure for $u$ with corresponding probability density function $\pi_{0}$.  
Our interest is the posterior probability measure of $u$ given $y$ which we denote by $\mu^{y}$ with probability density function $\pi^{y}$.  
By Bayes' rule we find the Radon--Nikodym relation
\begin{align}\label{RN}
\frac{\mathrm{d}\mu^{y}}{\mathrm{d} \mu_{0}}(u) =  \frac{1}{Z(y)}\exp(-\Phi(u;y)), \quad Z(y) := \int_{X} \exp(-\Phi(u;y)) \, \mathrm{d} \mu_{0}(u).
\end{align}
To be able to apply the Bayesian framework for inverse problems developed in \cite{Cotter, Stuart} we require a prior measure $\mu_{0}$ that satisfies $\mu_{0}(X) = 1$. This means that the functions drawn from the prior measure $\mu_{0}$ belong to the space $X$ almost surely.  

For the benefit of the reader we recall that for two probability measures $\mu_{1}$ and $\mu_{2}$ on a measurable space $(X, \mathcal{B}X)$, both absolutely continuous with respect to the same $\sigma$-finite reference measure $\nu$, i.e., $\mu_{i} \ll \nu$ for $i = 1,2$, the Hellinger distance $d_{\mathrm{H}}(\mu_1, \mu_2)$ between $\mu_{1}$ and $\mu_{2}$ is defined as
\begin{align*}
d_{\mathrm{H}}(\mu_{1}, \mu_{2}) = \left ( \frac{1}{2} \int_{X} \left ( \sqrt{\frac{\mathrm{d} \mu_{1}}{\mathrm{d} \nu}} - \sqrt{\frac{\mathrm{d} \mu_{2}}{\mathrm{d} \nu}} \right )^{2} \mathrm{d} \nu \right )^{1/2}
\end{align*}
with Radon--Nikodym derivatives $\frac{\mathrm{d} \mu_{1}}{\mathrm{d} \nu}$ and $\frac{\mathrm{d} \mu_{2}}{\mathrm{d} \nu}$ of $\mu_{1}$ and $\mu_{2}$, respectively.

\subsection{Well-posedness}
The well-posedness result for the Bayesian inverse problem \eqref{obs:data:com} is formulated as follows.

\begin{theorem}\label{thm:wellposed}
Consider the inverse problem of finding parameters $u = (\PP, \chi, \CC)$ from noisy observations of the form \eqref{obs:data} subject to $\varphi$ solving \eqref{CH}, with observational noise $\eta \sim \mathrm{N}(0,\Gamma)$ where $\Gamma$ is a strictly positive definite covariance operator.  Let $\mu_{0}$ be a prior measure satisfying
\begin{align}\label{ass:mu0}
\mu_{0}(X) = 1, \quad \mu_{0}(\{\abs{u} < r \} \cap X) > 0 \text{ for all } r > 0,
\end{align}
where $X$ is the space defined in \eqref{defn:X}.  Then, under \eqref{ass:para}, \eqref{ass:hfg} and either \eqref{ass:C4} or \eqref{ass:poly}, the posterior measure $\mu^{y}$ given by the relation \eqref{RN} is a well-defined probability measure and is Lipschitz continuous in the Hellinger metric with respect to the data, i.e., for any $r > 0$ there exists a positive constant $C = C(r)$ such that for all $y_{1}, y_{2} \in Y$ with $\max (\normY{y_{1}}, \normY{y_{2}}) < r$, it holds that $d_{\mathrm{H}}(\mu^{y_{1}}, \mu^{y_{2}}) \leq C \normY{y_{1} - y_{2}}$.
\end{theorem}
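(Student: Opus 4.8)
The plan is to follow the abstract well-posedness framework of \cite{Stuart}, exploiting crucially that the parameter space $X$ in \eqref{defn:X} is a \emph{bounded} subset of $\R^3$: there is a constant $R_X > 0$ with $\abs{u} \leq R_X$ for all $u \in X$, so every factor of the form $\exp(\eps \abs{u}^2)$ appearing below is uniformly bounded on $X$ and integrability against $\mu_0$ is automatic. This reduces the entire argument to the properties \eqref{lem:1}--\eqref{lem:4} established in the preceding lemma together with the prior assumption \eqref{ass:mu0}.

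First I would verify that the normalisation constant $Z(y)$ in \eqref{RN} satisfies $0 < Z(y) < \infty$. For the upper bound I invoke \eqref{lem:1} with $\eps = 1$, giving $\exp(-\Phi(u;y)) \leq \exp(\abs{u}^2 - M) \leq \exp(R_X^2 - M)$ uniformly on $X$, so that $Z(y) \leq \exp(R_X^2 - M)\,\mu_0(X) < \infty$ since $\mu_0$ is a probability measure. For strict positivity I use \eqref{lem:2}: choosing $r > \max(R_X, \normY{y})$ gives $\Phi(u;y) \leq K(r)$ on $X$, whence $Z(y) \geq \exp(-K(r))\,\mu_0(\{\abs{u} < r\} \cap X) =: c(r) > 0$ by the second condition in \eqref{ass:mu0}. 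Consequently \eqref{RN} defines a genuine probability measure $\mu^{y}$ absolutely continuous with respect to $\mu_0$.

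Next I would establish the Lipschitz dependence in the Hellinger metric, taking $\mu_0$ as the common reference measure. Writing $d_i := Z(y_i)^{-1/2}\exp(-\Phi(u;y_i)/2)$, I split $2 d_{\mathrm{H}}(\mu^{y_1},\mu^{y_2})^2 = \int_X (d_1 - d_2)^2 \dd \mu_0$ in the standard way and use $(d_1 - d_2)^2 \leq 2 d_1' + 2 d_2'$ to obtain
\begin{align*}
2 d_{\mathrm{H}}(\mu^{y_1},\mu^{y_2})^2 \leq \frac{2 I_1}{Z(y_1)} + 2\,\abs{Z(y_1)^{-1/2} - Z(y_2)^{-1/2}}^2 Z(y_2),
\end{align*}
where $I_1 := \int_X \big(\exp(-\Phi(u;y_1)/2) - \exp(-\Phi(u;y_2)/2)\big)^2 \dd \mu_0$. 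For $I_1$ I would apply the elementary bound $\abs{e^{-a}-e^{-b}} \leq \max(e^{-a},e^{-b})\abs{a-b}$ with $a = \Phi(u;y_1)/2$, $b = \Phi(u;y_2)/2$, control the prefactor $\max(e^{-\Phi(u;y_1)}, e^{-\Phi(u;y_2)})$ on the bounded set $X$ via \eqref{lem:1}, and bound $\abs{\Phi(u;y_1) - \Phi(u;y_2)} \leq \exp(\eps \abs{u}^2 + H)\normY{y_1 - y_2}$ via \eqref{lem:4}; integrating against the probability measure $\mu_0$ yields $I_1 \leq C(r)\normY{y_1 - y_2}^2$.

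The main obstacle is the second term, for which I must first show that $y \mapsto Z(y)$ is locally Lipschitz. I would derive this from $\abs{Z(y_1) - Z(y_2)} \leq \int_X \abs{\exp(-\Phi(u;y_1)) - \exp(-\Phi(u;y_2))}\dd \mu_0$, bounding the integrand exactly as for $I_1$ (same elementary inequality together with \eqref{lem:1} and \eqref{lem:4}) to get $\abs{Z(y_1) - Z(y_2)} \leq C(r)\normY{y_1 - y_2}$. Combining this with the uniform lower bound $Z(y_i) \geq c(r) > 0$ and the mean value estimate $\abs{Z(y_1)^{-1/2} - Z(y_2)^{-1/2}} \leq \tfrac12 c(r)^{-3/2}\abs{Z(y_1) - Z(y_2)}$ controls the second term by $C(r)\normY{y_1 - y_2}^2$, since $Z(y_2)$ is bounded above by the first step. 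Adding the two contributions gives $d_{\mathrm{H}}(\mu^{y_1},\mu^{y_2}) \leq C(r)\normY{y_1 - y_2}$, which is the claimed Lipschitz continuity. The delicate point throughout is purely the bookkeeping ensuring that all prefactors $\exp(\eps\abs{u}^2)$ remain integrable, which here is trivial precisely because $X$ is bounded and $\mu_0(X) = 1$.
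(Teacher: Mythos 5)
Your proposal is correct and follows essentially the same route as the paper's proof: positivity and finiteness of $Z(y)$ via \eqref{lem:1}--\eqref{lem:2}, the prior assumption \eqref{ass:mu0} and the boundedness of $X$; local Lipschitz continuity of $y \mapsto Z(y)$ via \eqref{lem:1} and \eqref{lem:4}; and the standard two-term Hellinger decomposition, where your elementary bound $\abs{e^{-a}-e^{-b}} \leq \max(e^{-a},e^{-b})\abs{a-b}$ plays exactly the role of the paper's integral identity \eqref{diff:id}. The only point you skip is the preliminary $\mu_{0}$-measurability of $\Phi(\cdot\,;y)$, needed for $Z(y)$ to be well defined, which the paper settles in one line from the Lipschitz continuity of $\G$ in \eqref{G:diff} together with \eqref{lem:3}.
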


\begin{proof}
The estimate \eqref{G:diff} tells us that the forward response operator
$\G:X \to Y$ is Lipschitz continuous with respect to $u$.  Furthermore, by assumption $\mu_{0}(X) = 1$, it holds that $\G$ is $\mu_{0}$-almost surely continuous, and thus $\G$ is also $\mu_{0}$-measurable.  By \eqref{lem:3} the continuity of $\Phi(\cdot \, ;y)$ with respect to $u$ then implies that $\Phi(\cdot \, ;y)$ is also $\mu_{0}$-measurable.  Let $u_m = \sqrt{3} \max(\PP_{\infty}, \chi_{\infty}, \CC_{\infty})$ so that $\abs{u} \leq u_m$ for all $u \in X$.  Then, for any $\eps > 0$, we find that
\begin{align}
\label{norma:lb}
0 & \underset{\eqref{ass:mu0}}{<}\exp(-K(r)) \mu_{0}(\{\abs{u} < r \} \cap X) 
\underset{\eqref{lem:2}}{\leq} \int_{X} \exp(-\Phi(u;y)) \mathrm{d} \mu_{0}(u) = Z(y) \\
\notag & \underset{\eqref{lem:1}}{\leq} \int_{X} \exp(\eps \abs{u}^{2} - M(\eps,r)) \, \mathrm{d} \mu_{0}(u) \leq \exp(\eps u_m^{2} - M(\eps,r)) \mu_{0}(X) < \infty. 
\end{align}
Therefore, the measure $\mu^{y}$ defined via the relation \eqref{RN} is well-defined on $X$.  To show the Lipschitz dependence of $\mu^{y}$ on the data $y$, let $Z_i$ denote the normalization constants for $\mu^{y_i}$ for $i = 1,2$.  We take note of the useful identity
\begin{align}\label{diff:id}
e^{-\Phi(u; y_1)} - e^{-\Phi(u;y_2)} = \int_0^1 e^{-(z \Phi(u;y_1) + (1-z) \Phi(u;y_2))} \, \mathrm{d}z \; (\Phi(u;y_1) - \Phi(u;y_2)).
\end{align}
Then, for $\max (\normY{y_{1}}, \normY{y_{2}}) < r$, and any $\eps > 0$, applying \eqref{lem:1} and \eqref{lem:4} yields
\begin{align}\label{diff:normal}
\abs{Z_{1} - Z_{2}} 
 \leq \left (\int_{X} e^{\eps \abs{u}^{2} - M} e^{\eps \abs{u}^{2} + H} \, \mathrm{d} \mu_{0}(u) \right ) \normY{y_{1} - y_{2}} \leq C \normY{y_1 - y_2}.
\end{align}
From the definition of the Hellinger distance, using \eqref{lem:1}, \eqref{lem:4}, \eqref{norma:lb}, \eqref{diff:id} and \eqref{diff:normal}, and writing $Z_i^{-\frac{1}{2}} = Z_i / Z_i^{-\frac{3}{2}}$, we obtain
\begin{align*}
& 2 d_{\mathrm{H}}(\mu^{y_{1}}, \mu^{y_{2}})^{2} = \int_{X} \left ( \Big (Z_{1}^{-\frac{1}{2}} - Z_{2}^{-\frac{1}{2}} \Big ) e^{-\frac{1}{2} \Phi(u;y_{1})} + Z_{2}^{-\frac{1}{2}} \Big (e^{-\frac{1}{2} \Phi(u;y_{1})} - e^{-\frac{1}{2} \Phi(u;y_{2})} \Big )  \right)^{2} \mathrm{d} \mu_{0}(u) \\
& \quad \leq 2 \abs{Z_{1}^{-\frac{1}{2}} - Z_{2}^{-\frac{1}{2}}}^{2} \int_{X} e^{-\Phi(u;y_{1})} \mathrm{d} \mu_{0}(u) + \frac{2}{Z_{2}} \int_{X} \left (e^{-\frac{1}{2} \Phi(u;y_{1})} - e^{-\frac{1}{2} \Phi(u;y_{2})} \right)^{2} \mathrm{d} \mu_{0}(u) \\
& \quad \leq C  \abs{Z_1^{-\frac{1}{2}} - Z_2^{-\frac{1}{2}}}^2 + C \int_{X} \abs{\Phi(u;y_{1}) - \Phi(u;y_{2})}^{2} e^{2 (\eps \abs{u}^{2} - M(\eps,r))} \, \mathrm{d} \mu_{0}(u) \\
& \quad \leq C \max(Z_1^{-3},Z_2^{-3}) \abs{Z_1 - Z_2}^2 + C \int_{X} e^{2(\eps \abs{u}^{2}-M(\eps,r))} e^{2(\eps \abs{u}^{2} + H(\eps,r))} \, \mathrm{d} \mu_{0}(u) \normY{y_{1} - y_{2}}^{2} \\
 & \quad \leq C \normY{y_{1} - y_{2}}^{2}.
\end{align*}
This completes the proof.
\end{proof}

\subsection{Truncated Gaussian priors}
\label{ssec:priors} 
A natural example for a prior measure $\mu_{0}$ is the truncated Gaussian measure.  
This is a generalisation of a Gaussian probability measure on a bounded set.
To construct it we fix $(m_{\PP}, m_{\chi}, m_{\CC}) \in X$ and select positive constants $(\sigma_{\PP}, \sigma_{\chi}, \sigma_{\CC})$.  
Then we define
\begin{align*}
\phi(b,c;x) :=\exp \left ( - \frac{1}{2} \frac{(x-b)^{2}}{c^{2}} \right ), \quad \psi(a,b,c;x) := \frac{\phi(b,c;x)\Ind_{[0,a]}(x)}{\int_{0}^{a} \phi(b,c;x) \, \mathrm{d}x}.
\end{align*}
It is easy to see that $\psi$ is the density function of a truncated Gaussian on the interval $[0,a]$. 
The underlying untruncated Gaussian measure has mean $b$ and variance $c^{2}$.  
Our truncated Gaussian prior takes the form
\begin{align*}
\mu_{0}^{\mathrm{TG}}(A) = \int_{X \cap A} \prod_{i \in \{\PP,\chi, \CC\}} \psi(i_{\infty}, m_{i}, \sigma_{i}; x_{i}) \, \mathrm{d} x_{\PP}  \,\mathrm{d} x_{\chi} \, \mathrm{d} x_{\CC}
\end{align*}
for any measurable subset $A \subset \R^{3}$. 
The product structure of the prior means that we assume a priori that the parameters $\PP, \chi$ and $\CC$ are (stochastically) independent.
From the definition of $\psi$, it is clear that draws from $\mu_{0}^{\mathrm{TG}}$ lie in $X := [0, \PP_{\infty}] \times [0,\chi_{\infty}] \times [0,\CC_{\infty}]$ almost surely.
Moreover, the positivity of the density function $\pi_{0}$ guarantees that
$\mu_{0}^{\mathrm{TG}}(\{\abs{u} < r \} \cap X) > 0$ for all $r > 0$.
Hence, by \Cref{thm:wellposed}, the Bayesian inverse problem with the truncated Gaussian prior $\mu_{0}^{\mathrm{TG}}$ is well-posed.
 
\begin{remark}
An analogous well-posedness result can be proved for various absolutely continuous prior probability measures concentrated on $X$, such as a uniform distribution on $X$ or a truncated log-normal distribution.  
We do not investigate the choice of a prior measure for our Bayesian inverse problem in detail for the following reason:
due to the high-dimensional data space $Y$ and the small noise levels, we expect that the suggested likelihoods are highly informative. 
In this case, if \eqref{ass:mu0} holds, then the posterior measure can be considered independent of the prior measure.
This is a consequence of the asymptotic statement of the Bernstein--von Mises Theorem, see \cite[\S 10]{vdVaart1998} for a rigorous introduction or \cite[p.~568--569]{Owhadi2015} for an informal discussion.
\end{remark}

\begin{remark}
	\change{
	In the literature, many authors use uniform priors on non-negative bounded intervals, where the upper bound is often derived by a realistic guess from the data, see e.g. \cite{Oden2010}.
	However, Simpson et al. \cite{Simpson2017} point out  that a uniform prior might not always be a suitable choice.
	We think that the parameters should neither be too small nor too large, i.e., that they are likely located away from the endpoints of the given parameter intervals.
	This motivates our choice of a (truncated) Gaussian prior.}	
\end{remark}

\section{The fully discrete tumour model}
\label{sec:FE} 

Let $0=t_0 < t_1 < \ldots < t_k < \ldots < t_K = T$ denote a subdivision of the interval $I = [0,T]$. 
At time instance $t_k$ we define a subdivison $\mathcal T_h^k = \{T_i^k\}_{i=1}^{N^k}$ of $\overline \Omega$ containing 
closed triangles $T_i^k$ that exactly represent $\overline\Omega$, which in the following is assumed to be bounded with polygonal boundary.
On $\mathcal T_h^k$ we define the finite element function space 
\begin{align*}
  V_h^k = \{ v \in C(\overline\Omega)\,|\, v|_{T_i^k} \mbox{ is linear}, \, i=1,\ldots,N^k\},
\end{align*}
i.e., the space of piecewise linear and globally continuous finite element functions.  
We point out that from the dynamics associated with the Cahn--Hilliard equation, it is expected that the variable $\varphi$ takes constant values in large regions of $\Omega$ and is rapidly changing near the growing front of the tumour.  
Thus, adaptive meshing is necessary and therefore we use a different subdivision of the spatial domain at every time step.

At time instance $k$ we construct finite element approximations  $\varphi_h^k$, $\mu_h^k$, $\sigma_h^k$ $\in V_h^k$
of $\varphi$, $\mu$, $\sigma$, respectively.
To this end let $\varphi^{k-1}, \sigma^{k-1} \in V^{k-1}_h$ be given, 
let $I_h^k : C(\overline \Omega) \to V^k_h$ denote the Lagrangian interpolation operator, and set $\tau := t_{k} - t_{k-1}$.  
At time $t_k$ we compute $\varphi^k_h,\mu^k_h,\sigma^k_h \in V^k_h$ such that for all $v \in V^k_h$ it holds 
\begin{subequations}
\label{eq:FD:scheme}
\begin{alignat}{3}
\notag (\varphi^k_h,v) + \tau (m(I_h^k\varphi^{k-1}) \nabla \mu^k_h,\nabla v) & = (I_h^k \varphi^{k-1},v)  + \tau \PP (f(I_h^k\varphi^{k-1})g(\sigma_h^k),v) \\
& \quad +\tau \chi(m(I_h^k\varphi^{k-1})\nabla \sigma^k_h, \nabla v),
\label{eq:FD:1_CH}\\
 \eps (\nabla \varphi_h^k, \nabla v)
  + \eps^{-1}( \Psi'(\varphi_h^k),v)^h & = (\mu_h^k,v), \label{eq:FD:2_CH}\\
(\sigma^k_h,v) + \tau (\nabla \sigma^k_h,\nabla v) & =  (I_h^k\sigma^{k-1},v) - \tau \CC(h(I_h^k\varphi^{k-1})\sigma^k_h,v),
  \label{eq:FD:3_NU}
\end{alignat}
\end{subequations}
where $(\phi,\psi) = \int_{\Omega} \phi \, \psi \, \mathrm{d}x$ denotes the $L^{2}(\Omega)$-inner product.
In \eqref{eq:FD:2_CH} we use the lumped integration $(u,v)^h = \int_\Omega I_h^k(uv)\, \mathrm{d}x$ 
for the integral involving $\Psi'(\varphi_{h}^{k})$.  
For $k=1$ we set $\varphi^0 := \Pi_h \varphi_0$ and $\sigma^0 := \Pi_h \sigma_0$, where
$\Pi_h$ denotes the $L^2$-projection onto $V^1_h$, and in simulations we choose $\eps = 0.05$.

For the potential $\Psi$ we use a relaxed \change{double obstacle} potential \cite{1991_BloweyElliott,2011_HintHT}:
\begin{align*}
  \Psi(\varphi) = \frac{1}{2}(1-\varphi^2) + \frac{s}{2}\Lambda_\rho(\varphi)
\end{align*}
for some constant $s \gg 0$, 
where $\Lambda^\prime_\rho(\varphi) = \lambda_\rho(\varphi) := \max_\rho(0,\varphi-1)+\min_\rho(0,\varphi+1)$.
Note that $\min_\rho(\cdot,\cdot)$ and $\max_\rho(\cdot,\cdot)$ are regularizations of $\min(\cdot,\cdot)$ and $\max(\cdot,\cdot)$ according to \cite[(2.5)]{2011_HintK} such that the resulting potential $\Psi$ belongs to $C^{3,1}(\R)$.   For our numerical simulations, we fix $s = 10^4$ and $\rho = 0.001$.

The functions $f$, $g$, $h$, and $m$ are chosen as in \cite{KL}: let $q(s) := \min(1, \max(s, -1))$, then we define
\begin{align*}
 f(s) &= \frac{1}{2}(\cos (\pi q(s)) +1 ),\quad  h(s) = \frac{1}{2}\left(\sin\left(\frac{\pi}{2} q(s) \right) + 1\right), \quad
  m(s) = (m_1-m_0)f(s) + m_0,
\end{align*}
and for some $M > 0$, we consider $g$ such that $g(s) = 0$ if $ s\leq 0$, $g(s) = M$ if $s \geq M$, and
\begin{align*}  g(s) =   \begin{cases}
  s^2(-\theta^{-2}s+2\theta^{-1}) & \mbox{ if } 0 < s<\theta,\\
  s & \mbox{ if } \theta \leq s \leq M-\theta,\\
  -\theta^{-2}(s-M)^3 -2\theta^{-1}(s-M)^2 + M & \mbox{ if } M-\theta < s < M.
  \end{cases}
\end{align*}
The value $M$ can be viewed as the maximum amount of nutrition that can be used for proliferation.  For simulations we choose
\begin{align*}
M = 10, \quad \theta = 0.01, \quad m_1 = 0.05, \quad m_0 = 5 \cdot 10^{-6}.
\end{align*}
Let us motivate the reason for choosing a very small number for $m_0 = m(-1)$.  
If we consider the mobility $m(\varphi) \equiv 1$, then \eqref{varphi} reduces to
\begin{align}\label{phi:const:mob}
\varphi_t = \Lap \mu - \chi \Lap \sigma + \PP f(\varphi) g(\sigma),
\end{align}
and in preliminary tests not reported here we observe the sudden appearance of new tumour cells in the host cell region $\{\varphi(x,t) = -1\}$ that are far away from the main tumour region $\{\varphi(x,t) = 1\}$.  
We attribute this non-physical effect to the chemotaxis term $-\chi \Lap \sigma$ in \eqref{phi:const:mob}, since small variations of the nutrient $\sigma$ in the host cell region can induce growth of the tumour cells there.  
Therefore, we employ a non-constant mobility $m(\varphi)$ such that $m(-1)$ is nearly degenerate to limit the chemotaxis mechanisms in the host cell regions; this has also been used in previous works \cite{GLNS,GLSS,Wise}.

At this point we note a discrepancy between the proposed numerical set-up and the theoretical results.  That is, our numerical domain (denoted by $\Omega_h$ in this paragraph) does not fulfil the requirement outlined in \eqref{ass:C4}.  However, in the subsequent simulations (see \Cref{fig:num:artData:phi_d}(a) and (b)), the tumour region $\{\varphi(x,t) = 1 \}$ is located far away from the computational boundary $\pd \Omega_h$, with $(\varphi, \mu, \sigma)$ attaining nearly constant values in a neighbourhood of $\pd \Omega_h$.   This allows us to treat $\Omega_h$ as a Lipschitz subset of a larger domain $\Omega$ that fulfils \eqref{ass:C4}, so that the theoretical results are valid there.

%

\section{Sequential Monte Carlo with tempering}
\label{sec:SMC} 
To solve the Bayesian inverse problem we apply particle-based methods. 
In particular, we approximate the posterior measure $\mu^y$ by a discrete measure of the form
\begin{equation*}
\widehat{\mu}^y = \sum_{i = 1}^n w^{(i)} \delta_{u^{(i)}}.
\end{equation*}
Here, $w^{(i)} > 0$, $i = 1,\dots,n$, are positive weights that sum to one, and $\{u^{(i)}\}_{i=1}^n \in X^n$ is an ensemble of particles.
In the following, we briefly review two methods that are popular in Bayesian statistics and Bayesian inversion, namely \emph{importance sampling} and \emph{sequential Monte Carlo (SMC)}, see \cite{Agapiou2015,Beskos2015,DelMoral2006,Kantas2014} for more details.

 \subsection{Importance sampling}
Let $Q: X \rightarrow \mathbb{R}$ be a function that is square-integrable with
respect to the posterior measure.
\emph{Importance sampling} is based on the following identity
\begin{equation*}
\int Q \, \mathrm{d} \mu^y = \int Q  \frac{\mathrm{d} \mu^y}{\mathrm{d} \mu_0} \, \mathrm{d}\mu_0 = \frac{1}{ Z(y)} \int Q(u) \exp(-\Phi(u;y)) \, \mathrm{d}\mu_0(u)
\end{equation*}
which is a consequence of Bayes' formula \eqref{RN}.  
The above identity tells us that we can replace integrals given w.r.t.~the posterior by integrals w.r.t.~the prior.  While we are typically not able to sample (independently) from the posterior measure, it is often possible to sample from the prior.

We apply standard Monte Carlo techniques to approximate the normalisation constant $Z(y)$ and the integral $$\int Q(u) \exp(-\Phi(u;y))\, \mathrm{d}\mu_0(u)$$ using $n$ samples of the prior $\mu_0$.
This is equivalent to integrating $Q$ w.r.t.~a specific discrete measure given by
\begin{equation*}
\widehat{\mu}^y = \sum_{i = 1}^n w^{(i)} \delta_{u^{(i)}}, \ \ w^{(i)} = \frac{\exp(-\Phi(u^{(i)};y))}{\sum_{j = 1}^n\exp(-\Phi(u^{(j)};y))}, \ \ i=1,\dots,n, \ \  u^{(1)},\dots,u^{(n)} \sim \mu_0  \text{ i.i.d.}.
\end{equation*}
The random variables $u^{(1)},\dots,u^{(n)}$ are measurable functions mapping from a probability space $(\Omega', \mathcal{F}',\mathbb{P})$ to $(X, \mathcal{B}X)$, where we recall that $\mathcal{B}X$ denotes the Borel-$\sigma$-algebra of $X$.
Hence the measure $\widehat{\mu}^y$ is a measure-valued random variable.
It is possible to show that $\widehat{\mu}^y$ converges weakly to the posterior measure ${\mu}^y$ as $n \rightarrow \infty$. 
In fact, \cite[Thm. 2.1]{Agapiou2015} states that
\begin{equation} \label{eq:CV_Bound}
\sup_{\|Q\|_\infty \leq 1}\left(\int \left(\int Q \mathrm{d}\mu^y - \int Q \mathrm{d}\widehat{\mu}^y\right)^2 \mathrm{d}\mathbb{P}\right)^{1/2} 
\leq 2  \left(\frac{ 1+ {{\mathrm{cv}}^2}}{n}\right)^{1/2},
\end{equation}
where the quantity
\begin{align*}\label{cv}
{\mathrm{cv}} := \left ( \frac{\int \exp(-\Phi(\cdot  \, ;y))^2 \,
\mathrm{d}\mu_0}{\left(\int \exp(-\Phi(\cdot \, ;y))\, \mathrm{d}\mu_0\right)^2}-1 \right)^{\frac{1}{2}}
\end{align*}
is the \textit{coefficient of variation} of the update density $\exp(-\Phi)$.

\subsection{Sequential Monte Carlo}
Importance sampling can be inefficient in Bayesian inversion, in particular, when the parameter space is high-dimensional or the data (resp.~the likelihood) is highly informative.   In these cases the posterior can be concentrated in a small region of the parameter space.  
In contrast, the prior is typically not concentrated, and so in this setting a large number of prior samples is required to obtain a useful approximation of the posterior measure.  
However, for every prior sample we need one evaluation of the potential $\Phi$, and in practice this can lead to a massive number of (expensive) model evaluations.

\emph{Sequential Monte Carlo} overcomes this issue by constructing a sequence of measures $\{\mu_k\}_{k = 0}^{K}$ starting with the prior, and slowly approaching the posterior $\mu_K \approx \widehat{\mu}^y$.
The sequence is constructed such that each of the measures $\mu_k$ allows an efficient importance sampling approximation of the measure $\mu_{k+1}$ for $k=0,\ldots,K-1$.

Observe that the noise covariance has an impact on the concentration of the posterior.  Hence, we construct our sequence by starting with a highly up-scaled noise covariance and proceed by scaling the noise-level to the actual level, i.e., {for $k = 1, \dots, K$},
\begin{equation*}
\frac{\mathrm{d}\mu_k}{\mathrm{d}\mu_0} \propto \exp(-\beta_k\Phi(\cdot \, ; y)) =\gamma_k = \exp \left( -\frac{1}{2}\normY{(\beta_k^{-1}\Gamma)^{-1/2}\G(u)}^2-\langle \G(u),(\beta_k^{-1}\Gamma)^{-1} y\rangle_{Y}\right), 
\end{equation*}
where $\{\beta_k \}_{k=0}^K$ is an increasing sequence starting at $\beta_0 = 0$ and finishing at $\beta_K = 1$.  Originating in statistical thermodynamics, this procedure is often referred to as \emph{tempering}.

Sequential Monte Carlo proceeds in the following way. 
First, $n$ samples $\{u^{(1)},\ldots,u^{(n)}\}$  are drawn independently from the prior $\mu_0$.  
Then, the samples are weighted with $\gamma_1$ to approximate $\mu_1$ using the importance sampling idea.  
This gives the discrete measure  $\widehat{\mu}_1$.  
We eliminate particles with smaller weights from the ensemble by \emph{resampling} the ensemble, drawing new, equally weighted samples $\{u^{(1)},\ldots,u^{(n)}\}$ from $\widehat{\mu}_1$.  
To distribute the particles more evenly in the parameter space, we pass the samples through a \emph{Markov kernel} that is stationary w.r.t.~$\mu_1$. 
This gives a new set of samples $\{u^{(1)},\ldots,u^{(n)}\}$ that is approximately $\mu_1$-distributed.   
The Markov kernel is typically given by a Markov chain Monte Carlo (MCMC) sampler, see \cite[\S 7-10]{Robert2004} for an introduction to MCMC and  \cite{Beskos2008,Cotter2013} for a discussion of MCMC methods for Bayesian inversion.

We then proceed iteratively for $k = 2,\ldots,K$.  
Each sample $\{u^{(1)},\ldots,u^{(n)}\}$ is weighted according to the update density $\gamma_k/\gamma_{k-1}$.  
Then, we resample the particles and apply a Markov kernel that is stationary w.r.t. to $\mu_k$.  When $k = K$, we stop the process and obtain $\widehat{\mu}_K =: \widehat{\mu}^y$.


\subsection{Adaptivity for the tempering}
It is not intuitively clear how to choose the tempering sequence $\{\beta_k \}_{k=0}^K$.  
A typical approach is induced by the importance sampling error and its connection to the coefficient of variation $\mathrm{cv}$ of the update density in \eqref{eq:CV_Bound}.
The bound in \eqref{eq:CV_Bound} tells us that if ${\mathrm{cv}}$ is small, then the accuracy of the importance sampling approximation is high, even if the number of samples $n$ is small.
On the other hand, a small ${\mathrm{cv}}$ leads to more sequential Monte Carlo steps, as $\{\beta_k \}_{k=0}^K$ needs more steps to reach $\beta_K = 1$.
Each step also decreases the accuracy, since more particle approximations are performed.  
This issue is termed \emph{path degeneracy} and has been observed and discussed for instance in \cite{Andrieu1999} and \cite[\S 5.1.2]{Latz2018}.  
To date finding an optimal ${\mathrm{cv}}$ is still an open research question, where by \emph{optimal} we mean that both the ${\mathrm{cv}}$ and the number of update steps $K$ are minimised.

In practice, we use a simple parameter fitting approach to choose $\{\beta_k \}_{k=0}^K$ adaptively such that the (sample) coefficient of variation in each update step equals some target $\overline{\mathrm{cv}} > 0$ that has been chosen a priori.  
One can show that this is in fact a root finding problem in one spatial dimension.  
We mention that the adaptive algorithm introduces a bias into the estimation of the model evidence.
In \cite{Beskos2016} it is shown that the adaptive SMC method is convergent. 
 
\section{Numerical examples}
\label{sec:num}
In this section we apply the SMC approach described in \Cref{sec:SMC} 
for the identification of the  parameters in the numerical approximation \eqref{eq:FD:scheme}  of the tumour model \eqref{CH}. 
In the remainder of this section we always work with fully discrete functions, 
and neglect the subscript $h$ for the finite element approximations introduced in \eqref{eq:FD:scheme}.

The implementation is done using \texttt{C++}. 
We use the finite element library FEniCS 1.6.0 \cite{fenics_book} together with
the PETSc 3.6.4 \cite{petsc_webpage} linear algebra backend  and the direct solver MUMPS 5.0.0 \cite{mumps}. The meshes are generated
and adapted using ALBERTA 3.0.1 \cite{alberta_book}.  
As we mention in \Cref{sec:intro}, we only present results for the more complex setting $(a)$.

We use the set-up from \cite{KL}: the domain $\Omega = (-5, 5) \times (-5, 5)$ with initial conditions
\begin{align*}
  \varphi_0(x) = \Phi_0( \eps^{-1}(1-\|x\|_{l^8} )), \quad \eps = 0.05, \quad   \sigma_0(x) \equiv 1 ,
\end{align*}
where for $z_0 = \arctan(\sqrt{s-1})$, $s = 10^4$,
\begin{align*}
  \Phi_0(z) := 
  \begin{cases}
    \sqrt{\frac{s}{s-1}}\sin(z) & \mbox{ if } 0 \leq z \leq z_0,\\
    \frac{1}{s-1}\left(s-\exp( \sqrt{s-1}(z_0-z)  )\right) & \mbox { if } z \geq z_0,
  \end{cases} \quad \Phi_0(z) = - \Phi_0(-z) \mbox{ if } z < 0.
\end{align*}
We simulate system \eqref{eq:FD:scheme} with the parameter values
\begin{align}\label{true:para}
\PP = 7, \quad \chi = 120, \quad \CC = 2
\end{align}
until the final time $T=4$ with time steps of size $\tau = 0.05$ to obtain $\varphi_d(x) = \varphi(x,T)$.
Then we add normally distributed noise with mean $0$ and standard deviation $0.1$ to every degree of freedom of $\varphi_d$.  
The resulting function is considered as the (synthetic) data $y$. 
In \Cref{fig:num:artData:phi_d} we display $\varphi_0$ (left), $\varphi_d$ (middle) and $y$ (right). 
\begin{figure}
  \centering
  \subfloat[$\varphi_0(x)$]{
  \includegraphics[width=0.25\textwidth]{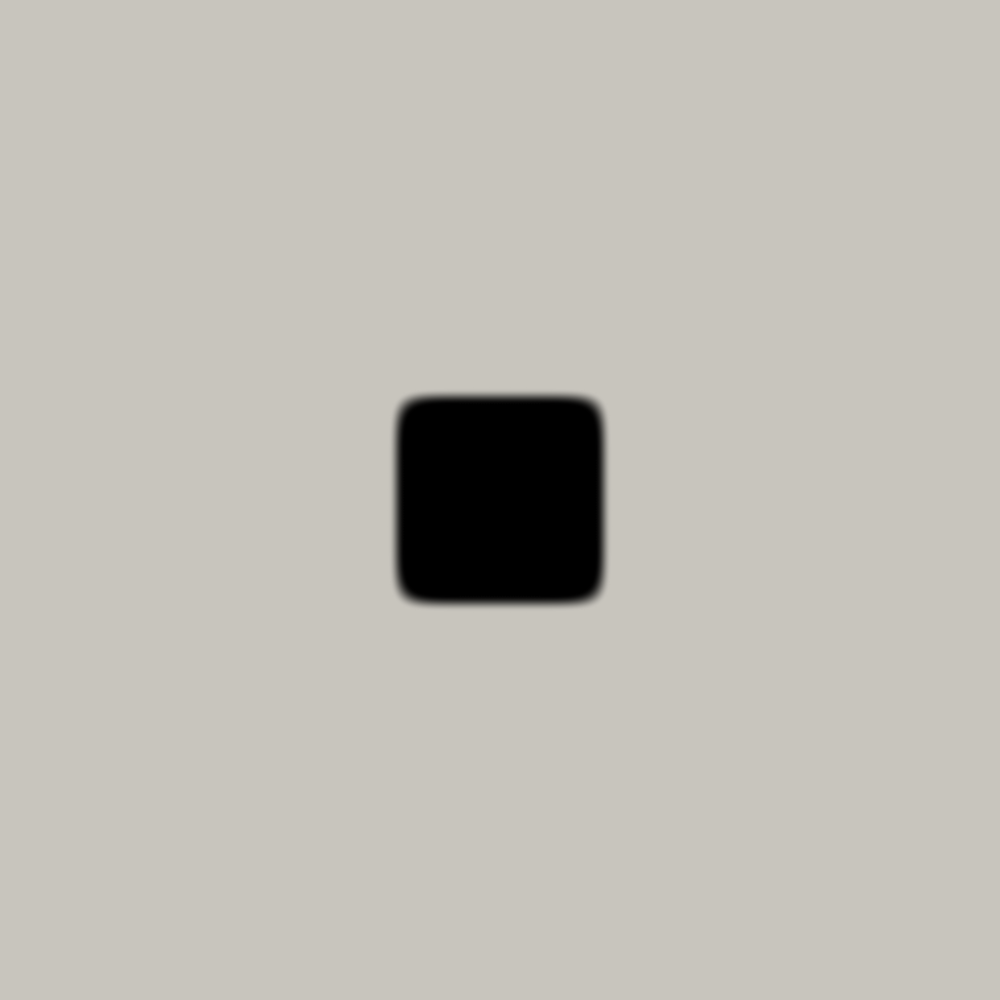}  
  }
  \hspace{0.5cm}
  \subfloat[$\varphi_d(x)$]{
  \includegraphics[width=0.25\textwidth]{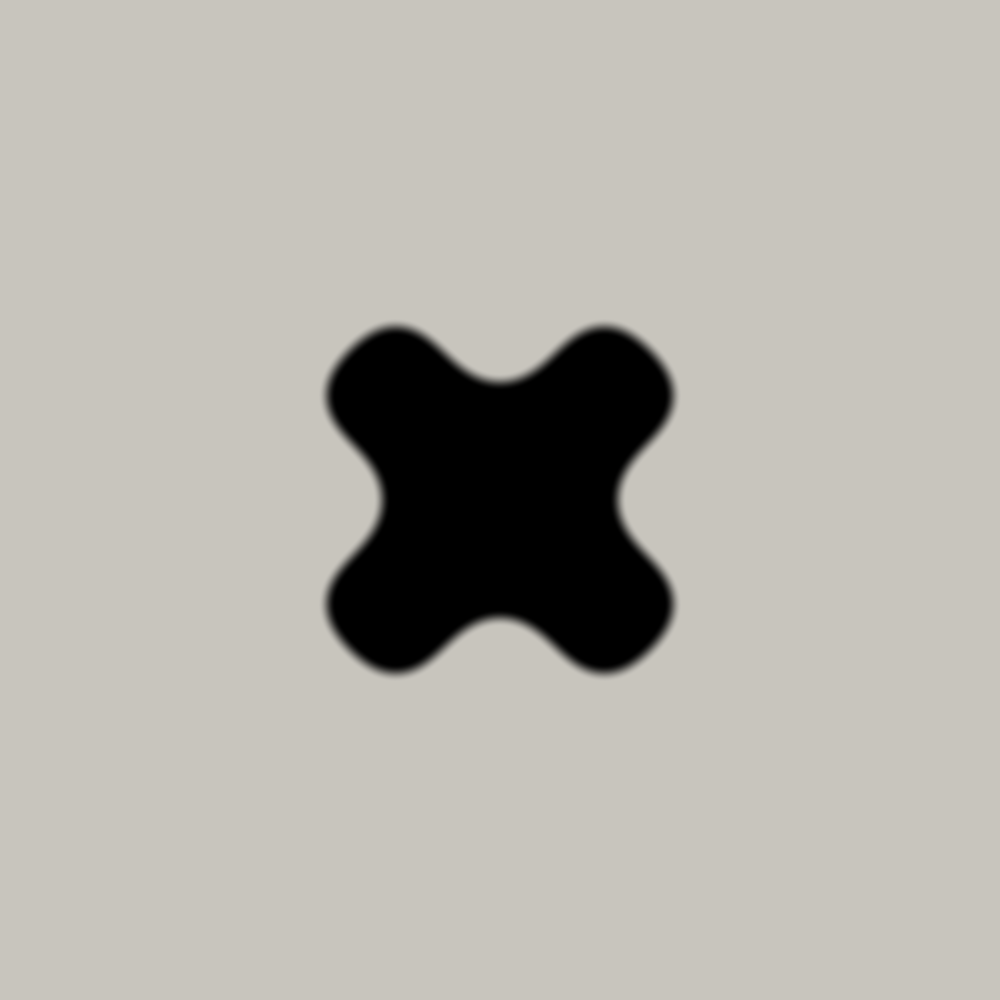}
  }
  \hspace{0.5cm}
  \subfloat[$y(x)$]{ 
  \includegraphics[width=0.25\textwidth]{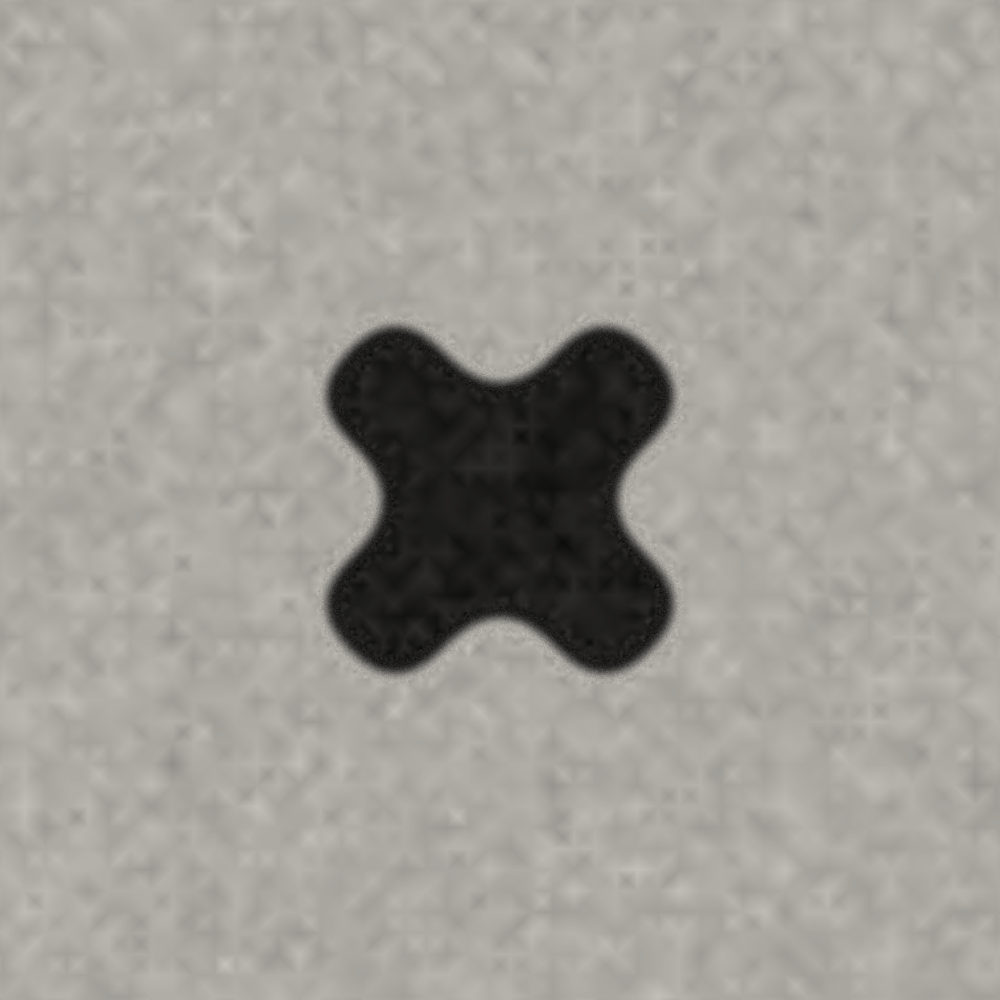}
  }
  \caption{The initial condition $\varphi_0(x)$ where the tumour occupies the black region and the host cells occupy the gray region (left), the uncorrupted data $\varphi_d(x) = \varphi(x,4)$ (middle) and the data $y(x)$ corrupted by noise (right).}
  \label{fig:num:artData:phi_d}
\end{figure}

We consider the likelihood defined in \eqref{likelihood} with given pointwise variance $\sigmaa^2 = 0.1$.  The prior is the product of truncated normal distributions  (see \Cref{ssec:priors}) with parameters
\begin{align*}
m_{\PP} = 5, \quad \sigma_{\PP} = 2, \quad m_{\chi} = 100, \quad \sigma_{\chi} = 40, \quad m_{\CC} = 5, \quad \sigma_{\CC} = 2.
\end{align*} 
Guided by medical rationality, we choose the upper bounds for the parameter space $X$ defined in \eqref{defn:X} to be
\begin{align*}
\PP_{\infty} = 10, \quad \chi_{\infty} = 200, \quad \CC_{\infty} = 10.
\end{align*}
\change{Note that the variances $\sigma_{\PP}^2, \sigma_{\chi}^2, \sigma_{\CC}^2$ refer to the underlying non-truncated Gaussian measures.
The variances of the (truncated Gaussian) prior measure are recorded in \Cref{tab:num:artData:prior_covariance}. 
The prior mean values coincide with the means of the non-truncated Gaussian measures.
}

\begin{table}
\centering


\begin{tabular}{c|rrr}
      &$\PP$    & $\chi$ & $\CC$\\
      \hline
$\PP$  &$3.6135$&$0$ &$0$\\
$\chi$ &        &$1445.4095$&$0$\\
$\CC$  &        &          &$3.6135$
\end{tabular}
  
\caption{\change{The prior covariances of the parameters $\PP$, $\chi$, and $\CC$.}}
\label{tab:num:artData:prior_covariance}
\end{table}

We use $n = 400$ particles to approximate the posterior measure and choose $\overline{\mathrm{cv}} = 0.25$ for the tempering steps.  
\change{Looking at the upper bound in \eqref{eq:CV_Bound}, this gives an effective sample size $n_{\text{eff}}\approx \lfloor n/(1+\overline{\mathrm{cv}}^2) \rfloor=\lfloor 400/(17/16) \rfloor = 376$, and a relative error of about 10\% in the measure approximation.}
Note that a single simulation run for one set of parameters in model \eqref{CH} takes around 5 minutes.  Hence, to be able to conduct numerical experiments within a reasonable amount of time, we do not use a large number of particles within SMC.

In \Cref{fig:num:artData:posterior_histogram_P_chi_C} we depict the marginal posterior distributions of $\PP$, $\chi$ and $\CC$, respectively.  The corresponding posterior sample means are
\begin{align}\label{post:mean}
m_{\PP}^y = 6.89, \quad m_{\chi}^y = 120.19, \quad m_{\CC}^y = 1.99
\end{align}  
which agree well with the parameter values \eqref{true:para} that have been used to generate the data $y$.

\begin{figure}
  \centering 
%
%
\definecolor{mycolor1}{rgb}{0.00000,0.44700,0.74100}%
\begin{tikzpicture}

\begin{axis}[%
width=3.8cm,
height=2cm,
at={(0.0cm,0.0cm)},
scale only axis,
xmin=5.306,
xmax=8.694,
ymin=0,
ymax=1.3,
domain=5.3:8.7,
axis background/.style={fill=white},
legend style={legend cell align=left,align=left,draw=white!15!black}
]
\addplot[fill=mycolor1,fill opacity=0.6,draw=black,ybar interval,area legend] plot table[row sep=crcr] {%
x	y\\
5.46	0.0324675324675325\\
5.537	0.0324675324675325\\
5.614	0\\
5.691	0.0324675324675325\\
5.768	0.12987012987013\\
5.845	0.227272727272727\\
5.922	0.0324675324675325\\
5.999	0.12987012987013\\
6.076	0.162337662337662\\
6.153	0.25974025974026\\
6.23	0.324675324675325\\
6.307	0.422077922077922\\
6.384	0.487012987012987\\
6.461	0.941558441558431\\
6.538	0.876623376623377\\
6.615	0.90909090909091\\
6.692	1.2012987012987\\
6.769	0.876623376623377\\
6.846	0.77922077922078\\
6.923	0.681818181818182\\
7	0.454545454545455\\
7.077	0.551948051948052\\
7.154	0.487012987012987\\
7.231	0.324675324675325\\
7.308	0.38961038961039\\
7.385	0.227272727272727\\
7.462	0.0974025974025963\\
7.539	0.194805194805195\\
7.616	0.357142857142857\\
7.693	0.292207792207792\\
7.77	0.292207792207792\\
7.847	0.064935064935065\\
7.924	0.162337662337662\\
8.001	0.0324675324675325\\
8.078	0.0974025974025974\\
8.155	0.12987012987013\\
8.232	0.064935064935065\\
8.309	0.0974025974025974\\
8.386	0.0974025974025974\\
8.463	0.0324675324675325\\
8.54	0.0324675324675325\\
};
\addlegendentry{$\PP$};

\addplot[line width=1pt] {gauss(5.0,2.0) };

\end{axis}
\end{tikzpicture}
  \hspace{0.5cm}
%
%
\definecolor{mycolor1}{rgb}{0.00000,0.44700,0.74100}%
\begin{tikzpicture}

\begin{axis}[%
width=3.8cm,
height=2cm,
at={(0.0cm,0.0cm)},
scale only axis,
xmin=89,
xmax=155,
ymin=0,
ymax=0.06,
domain=89:155,
axis background/.style={fill=white},
legend style={legend cell align=left,align=left,draw=white!15!black}
]
\addplot[fill=mycolor1,fill opacity=0.6,draw=black,ybar interval,area legend] plot table[row sep=crcr] {%
x	y\\
92	0.00666666666666667\\
93.5	0.00833333333333333\\
95	0.0116666666666667\\
96.5	0.00333333333333333\\
98	0.00833333333333333\\
99.5	0.005\\
101	0.0133333333333333\\
102.5	0.00666666666666667\\
104	0.01\\
105.5	0.0116666666666667\\
107	0.0116666666666667\\
108.5	0.015\\
110	0.0166666666666667\\
111.5	0.0283333333333333\\
113	0.0166666666666667\\
114.5	0.0483333333333333\\
116	0.0266666666666667\\
117.5	0.0233333333333333\\
119	0.03\\
120.5	0.035\\
122	0.0483333333333333\\
123.5	0.05\\
125	0.045\\
126.5	0.0383333333333333\\
128	0.04\\
129.5	0.0266666666666667\\
131	0.0133333333333333\\
132.5	0.015\\
134	0.0133333333333333\\
135.5	0.01\\
137	0.00333333333333333\\
138.5	0.00666666666666667\\
140	0\\
141.5	0.00166666666666667\\
143	0.00666666666666667\\
144.5	0.00333333333333333\\
146	0\\
147.5	0.00333333333333333\\
149	0\\
150.5	0.005\\
152	0.005\\
};
\addlegendentry{$\chi$};

\addplot[line width=1pt] {gauss(100,40) };

\end{axis}
\end{tikzpicture}
   \hspace{0.5cm}
%
%
\definecolor{mycolor1}{rgb}{0.00000,0.44700,0.74100}%
\begin{tikzpicture}

\begin{axis}[%
width=3.8cm,
height=2cm,
at={(0.0cm,0.0cm)},
scale only axis,
xmin=1.19,
xmax=3.17,
ymin=0,
ymax=2,
domain=1.19:3.17,
axis background/.style={fill=white},
legend style={legend cell align=left,align=left,draw=white!15!black}
]
\addplot[fill=mycolor1,fill opacity=0.6,draw=black,ybar interval,area legend] plot table[row sep=crcr] {%
x	y\\
1.28	0.166666666666667\\
1.325	0.277777777777777\\
1.37	0.166666666666667\\
1.415	0.0555555555555556\\
1.46	0.166666666666667\\
1.505	0.444444444444443\\
1.55	0.666666666666668\\
1.595	0.555555555555554\\
1.64	0.722222222222223\\
1.685	0.88888888888889\\
1.73	1.22222222222222\\
1.775	1.66666666666666\\
1.82	1.83333333333334\\
1.865	1.72222222222222\\
1.91	0.833333333333335\\
1.955	1.05555555555556\\
2	1.22222222222222\\
2.045	1.66666666666667\\
2.09	1.11111111111111\\
2.135	0.500000000000001\\
2.18	0.555555555555551\\
2.225	1\\
2.27	0.222222222222223\\
2.315	0.333333333333331\\
2.36	0.555555555555556\\
2.405	0.38888888888889\\
2.45	0.222222222222223\\
2.495	0.333333333333334\\
2.54	0.111111111111111\\
2.585	0.222222222222223\\
2.63	0.277777777777778\\
2.675	0.166666666666667\\
2.72	0\\
2.765	0.166666666666665\\
2.81	0.166666666666667\\
2.855	0.222222222222223\\
2.9	0.0555555555555551\\
2.945	0.0555555555555556\\
2.99	0.111111111111111\\
3.035	0.111111111111111\\
3.08	0.111111111111111\\
};
\addlegendentry{$\CC$};

\addplot[line width=1pt] {gauss(5.0,2.0) };

\end{axis}
\end{tikzpicture}
  \caption{The posterior marginal distributions for $\PP$, $\chi$, and $\CC$ with corresponding posterior mean $m_{\PP}^y = 6.89$, $m_{\chi}^y = 120.19$, and $m_{\CC}^y = 1.99$, respectively.  The black lines indicate parts of the prior distributions.
  }
  \label{fig:num:artData:posterior_histogram_P_chi_C}
\end{figure}
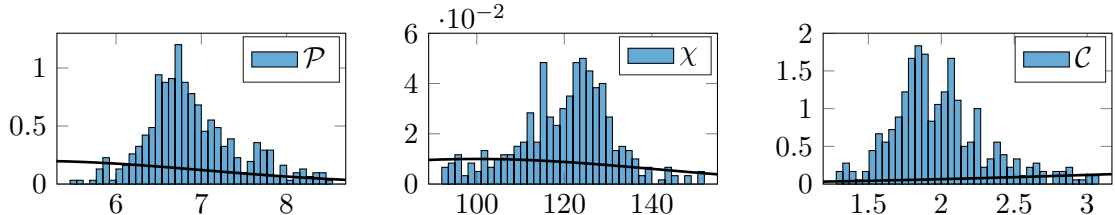   

In addition, we compute the \emph{maximum a posteriori (MAP)} estimator, that is, the global maximum of the density of the posterior measure.
\change{This connects the Bayesian approach for parameter identification with the classical, regularised minimisation approach, see e.g. \cite{SC,KS,Tarantola}.
Indeed, }
it is well known that for Gaussian priors the MAP is the solution of a Tikhonov regularised least-squares inverse problem \cite[\S 2.2]{Stuart}.
Hence the MAP can be computed by the method proposed in \cite{KL} by choosing suitable values for the individual Tikhonov weights associated with the parameters.  
For our example we obtain the MAP estimates
\begin{align}\label{MAP}
\MAP_\PP = 7.0188, \quad \MAP_\chi = 106.9212, \quad \MAP_\CC = 2.4836,
\end{align}
using the numerical code from \cite{KL} with the prior mean as initial value in the optimisation. 
We observe that the MAP estimate for both the chemotaxis parameter $\chi$ and the consumption rate $\CC$ does not agree very well with the underlying true parameter value \eqref{true:para}.  
This is in contrast to the Bayesian posterior mean estimate \eqref{post:mean}; we think that this might be due to the presence of local maxima.  
Furthermore, we obtain the same MAP estimate \eqref{MAP} when we start the numerical code with the underlying true parameter values \eqref{true:para}.
\change{
However, for small noise levels, the MAP actually corresponds to the underlying true parameter values \eqref{true:para}, 
see the results in \cite[\S 7.4]{KL}.
The larger noise level in our study has the effect of a stronger Tikhonov regularisation and thus a stronger influence of the prior measure.
}

In \Cref{tab:num:artData:posterior_covariance} we report the posterior covariances of the parameters.
\change{We observe that the posterior variances in \Cref{tab:num:artData:posterior_covariance} are about 3--8\% of the size of the prior variances in \Cref{tab:num:artData:prior_covariance}. 
The variance reduction corresponds to a reduction of uncertainty in the model parameters.
Equivalently, it can be interpreted as an information gain during the learning process from prior to posterior. 
See also \Cref{fig:num:artData:posterior_histogram_P_chi_C}, where the concentration of the posterior (histogram) with respect to the prior (graph) is depicted.
This justifies the Bayesian approach to the Cahn--Hilliard parameter identification problem.
 }
 
\change{Next we provide a model-based discussion of the covariances.}
We observe that $\PP$ and $\chi$ are negatively correlated, as are $\chi$ and $\CC$, but $\PP$ and $\CC$ have a positive correlation.  The negative correlation between $\PP$ and $\chi$ can be attributed to the fact that
both parameters cause tumour growth, albeit through different mechanisms; namely, $\PP$ leads to undirected growth, while $\chi$ gives directional growth depending on nutrient concentration.  
It is likely that neither parameter can be large at the same time in order to obtain a tumour of a comparable size to the data.  
Similarly, larger values of $\CC$ lead to a larger nutrient gradient, and so large values of $\chi$ would likely amplify the directed growth of the tumour.
The negative correlation between $\CC$ and $\chi$ is a means to match the simulations more closely with the data.
On the other hand, larger values of $\CC$ imply that nutrients are consumed at a faster rate, and thus on the growing front $\{\abs{\varphi(x,t)} < 1\}$ the level of nutrients is lower compared to regions away from the growing front.
Hence to maintain growth in regions of lower nutrient concentration a larger value of $\PP$ is desirable, which may attribute to a positive correlation between $\PP$ and $\CC$.

\begin{table}
\centering


\begin{tabular}{c|rrr}
      &$\PP$    & $\chi$ & $\CC$\\
      \hline
$\PP$  &$0.3038$&$-4.1340$ &$0.1369$\\
$\chi$ &        &$124.1128$&$-3.1234$\\
$\CC$  &        &          &$0.1134$
\end{tabular}
  
\caption{The posterior covariances of the parameters $\PP$, $\chi$, and $\CC$.}
\label{tab:num:artData:posterior_covariance}
\end{table}

Finally, let $\varphi^K$ denote the posterior output of the SMC algorithm, and denote by $\varphi^K_m$ and $\varphi^K_\sigma$ the pointwise (posterior) mean and variance of $\varphi^K$, respectively.  
In the left panel of \Cref{fig:num:artData:phi_K__mean_variance} we display the zero level lines of $\varphi^K_m$ (in black) and of $\varphi_d$ (in white) superimposed on a plot of $\varphi^K_m$.  
We see that the zero level lines match quite well, so that our output is close to the original synthetic data $\varphi_d$.  
In the right panel of \Cref{fig:num:artData:phi_K__mean_variance} we plot the pointwise variance $\varphi^K_\sigma$ together with the $\pm 1$ isolines of $\varphi^K_m$.  
The maximum of $\varphi^K_\sigma$, attained in the black regions, is of order $0.4$.

 \begin{figure}
   \centering
   \subfloat[Pointwise mean $\varphi^K_m$]{
   \includegraphics[width=0.4\textwidth]{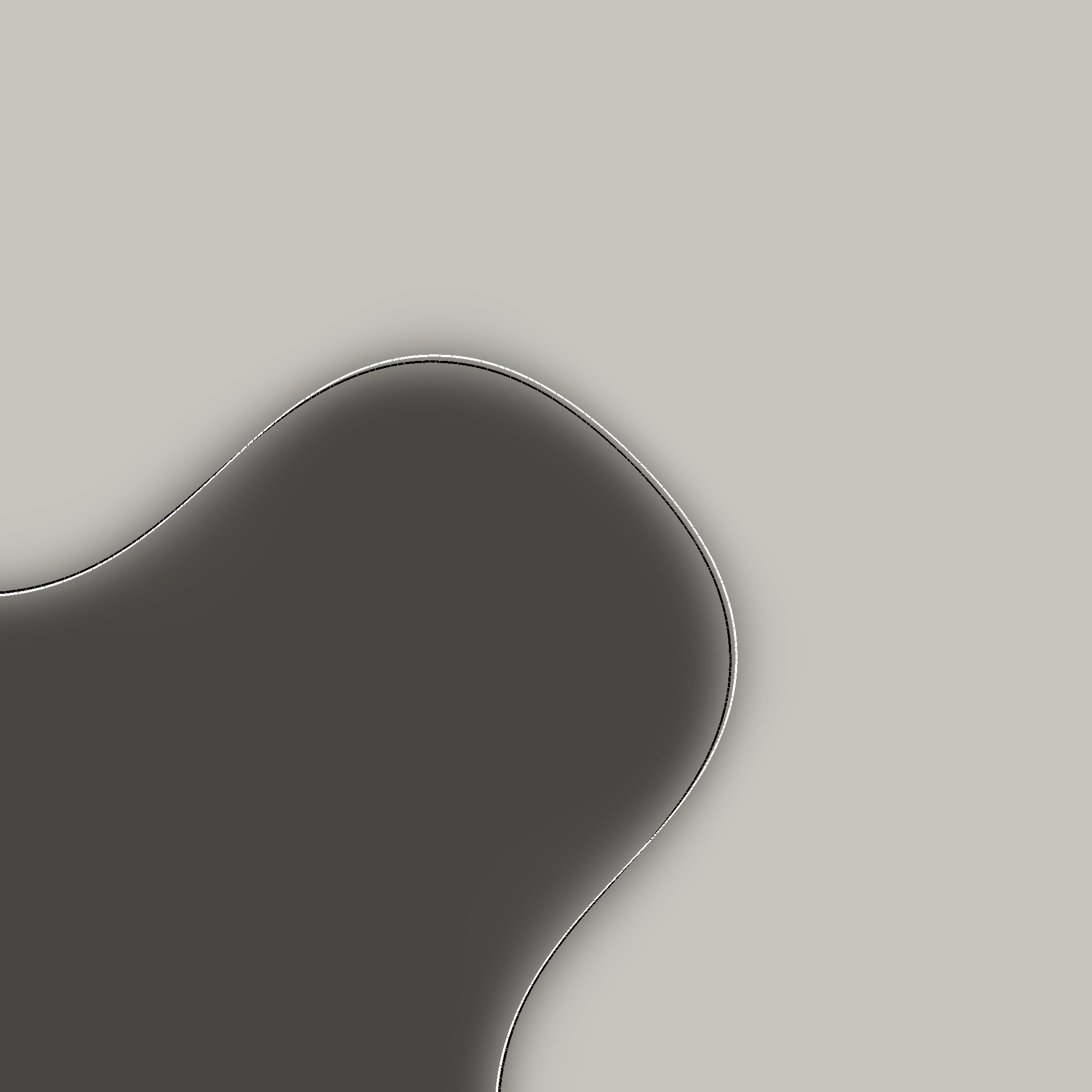}
   }
   \hspace{0.95cm}
   \subfloat[Pointwise variance $\varphi^K_\sigma$]{
   \includegraphics[width=0.4\textwidth]{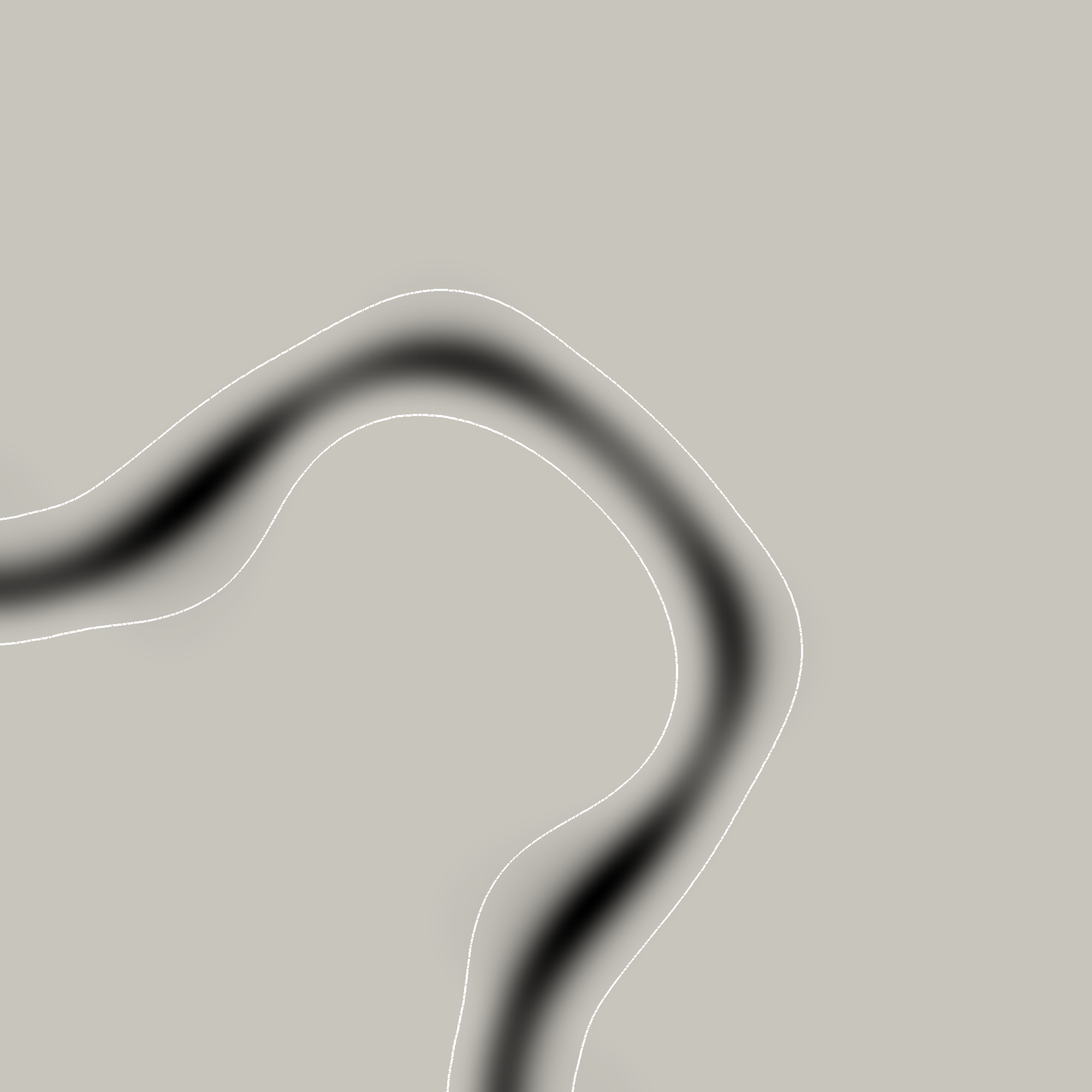}
  } 
   \caption{(Left) The pointwise mean $\varphi^K_m$ together with the zero level lines for $\varphi^K_m$ (in black) and for synthetic data $\varphi_d$ (in white).  The value of $\varphi^K_m$ is close to 1 in the darker regions and close to $-1$ in the lighter gray regions.  (Right) The pointwise variance $\varphi^K_\sigma$ together with the $\pm1$ isolines of $\varphi^K_m$ (in white).  The magnitude of $\varphi^K_\sigma$ is of order $0.4$ in darker regions, while in the light gray regions the magnitude is close to zero.  Due to symmetry of the configuration, we only show a quarter of the computational domain.}
    \label{fig:num:artData:phi_K__mean_variance}
 \end{figure}


\section{Discussion}\label{sec:Discussion}
In this paper we studied a Bayesian inverse problem to identify model parameters in a diffuse interface model for tumour growth.
We improved strong well-posedness results for the model \eqref{CH} and proved the well-posedness of the posterior measure for observational settings involving both finite and infinite-dimensional data spaces. 
For the numerical implementation we employed sequential Monte Carlo with tempering in combination with a finite element discretisation to approximate the posterior measure of the unknown parameters.  
We conducted a numerical experiment against a synthetic data set observing the tumour configuration at a fixed time.  
To finish we discuss possible directions of further work.

\subsection{Other model variants}
In some situations, the nutrient diffusion timescale ($\sim$ minutes) occurs at a much faster rate than the tumour doubling timescale ($\sim$ days).
Hence it is appropriate to neglect the time derivative in \eqref{sigma}, leading to a quasi-static evolution
\begin{align*}
0 = - \Lap \sigma + \CC h(\varphi) \sigma.
\end{align*}
However, the loss of the time derivative $\sigma_t$ implies that the regularity for the $\sigma$ variable is reduced.
Therefore some non-trivial modifications are needed to obtain the continuous dependence of $\varphi$ in the $C^0([0,T];C^0(\overline{\Omega}))$-norm (for setting $(a)$) and in the $C^0([0,T];L^2(\Omega))$-norm (for setting $(b)$) in order for the  resulting Bayesian inverse problem to be well-posed.  We leave this verification for future research and remark that ideas in \cite{GL} may be helpful.

Furthermore, many of the earlier diffuse interface tumour models include a notion of cellular velocity, which affixes the system \eqref{CH} with a Darcy-type equation and introduces convection terms for $\varphi$ and $\sigma$.  It is reported in \cite{GLNS} that such models produce biologically more realistic results compared to models without fluid velocity for the situation involving multiple species of cells.  We do not consider this extension in our present setting with two components (tumour and host cells), 
since the differences are less significant compared to the multispecies case. 
Further research is needed to improve the current analytical results for the models with Darcy flow so that an analogue of \eqref{ctsdep} is available.  Then, a similar analysis for the Bayesian inverse problem can be performed.

\subsection{Surrogates}
Bayesian inversion for the Cahn--Hilliard model \eqref{CH} is very expensive since the repeated evaluation of the likelihood requires forward solves of \eqref{CH} for many different parameter configurations and initial states. 
The computational burden can be reduced by constructing surrogates for $\varphi$ which can be evaluated cheaply without the need to run an expensive forward solve.  For Bayesian inversion a number of surrogates have been studied, e.g.\ Gaussian process models \cite{KH:2001}, or generalised polynomial chaos surrogates \cite{MN:2009,MNR:2007}.  
We point out that the solution of the Cahn--Hilliard model \eqref{CH} depends continuously on the parameters ($\mathcal{P}$, $\chi$, $\mathcal{C}$), and so we envision that it is feasible to construct smooth, polynomial based surrogates, or sparse grid surrogates.
While this has been \change{investigated for} the classical elliptic Bayesian inverse problem \cite{SchillingsSchwab:2013, SchwabStuart:2012}, this is not the case for Cahn--Hilliard models such as \eqref{CH}.
In addition, the error and convergence analysis for surrogates in Bayesian inversion is far from complete (see e.g.\ \cite{ST:2018,YanZhang:2017} for recent studies), and requires further work.

{
\bibliographystyle{plain}
\bibliography{KLLU_revised}}


\end{document}